 \newtheorem{thm}{Theorem}[section]
 \newtheorem{cor}[thm]{Corollary}
 \newtheorem{lem}[thm]{Lemma}
 \newtheorem{prop}[thm]{Proposition}
 \theoremstyle{definition}
 \newtheorem{defn}[thm]{Definition}
 \newtheorem{ex}[thm]{Example}
 \theoremstyle{remark}
 \newtheorem{rem}[thm]{Remark}
 \numberwithin{equation}{section}
\begin{document}

%
%
%
%
%
%
%
%
%

\title[A new look at optimal dual problem related to fusion frames]
 {A New Look at  Optimal Dual Problem Related to Fusion Frames}


\author[F. Arabyani Neyshaburi]{Fahimeh Arabyani-Neyshaburi}
\address{Department of Mathematics and Computer Sciences, Hakim Sabzevari University, Sabzevar, Iran.}
\email{fahimeh.arabyani@gmail.com}

\author[ A. Arefijamaal]{Ali Akbar Arefijamaal}
\address{Department of Mathematics and Computer Sciences, Hakim Sabzevari University, Sabzevar, Iran.}
\email{arefijamaal@hsu.ac.ir, arefijamaal@gmail.com}

\subjclass{Primary 42C15; Secondary 42C40, 41A58.}

\vspace{2cm}
\begin{abstract}
The purpose of this work is to examine the structure of optimal dual fusion frames and get more flexibility in the use of  dual fusion frames   for  erasures of subspaces.
We deal with optimal  dual fusion frames with  respect to  different definitions of duality and compare the advantages of these approaches.
In addition, we introduce a new concept so called  partial optimal dual which   involves  less time and computation for detecting  optimal dual  for erasures in known locations.
 Then we  study   the relationship between local and global optimal duals  by  partial optimal duals which leads to some applicable results. In the sense that, we obtain an overcomplete  frame and a family of associated  optimal duals by a given Riesz fusion basis. We present some examples to exhibit  the effect  of  error rate when dual fusion frames  are applied in  reconstruction.
\end{abstract}

\maketitle
\textbf{Key words:} Fusion frames;  component preserving duals; partial erasures; optimal duals;   $P$-optimal duals.

\section {Introduction}
Finite frame theory  has been recently  a major tool in many outstanding applications in engineering and applied mathematics such as filter bank theory, packet encoding, signal and image processing due to their resilience to additive noise and erasures \cite{Ar13, Ben06,  Bod04,  Bod03,  Bod05,  Bol98, sto}. Indeed, the redundancy property of frames reduces the errors of reconstructed  signals when erasures fall out.
Suppose a frame $F$ is preselected for encoding in a communication system in a finite dimensional Hilbert space. Optimal dual problem asks for finding the dual frames of $F$ that minimize the maximal  reconstruction error when some coefficients in transmission  have been erased or reshaped. In \cite{lopez}, the authors presented this problem and gave some sufficient conditions which the canonical dual is the unique optimal dual. Then  the authors \cite{leng} obtained several results under which the canonical dual  is optimal or not optimal for any $r$-erasures. Moreover,  the characterization of  extreme points in the  set of  all $1$-loss optimal duals  and    some discussions on conditions that an alternate dual frame is either optimal or not optimal  dual  can be found in \cite{Aa2018}. Some generalizations of optimal dual  problem have been done on reconstruction systems which are related to $g$-frames and fusion frames. See \cite{Bod03, massey}.

Fusion frames are one of the most important generalization of frames which provides efficient frameworks  for a wide range of applications that cannot be modeled by discrete frames \cite{Cassaz, Cas08, Ole01, sensor, hear}. So
motivated by  several applications in distributed processing, parallel processing and overall building efficient algorithms for reconstruction by fusion frames, Heineken et. al. generalized  the optimal dual problem  for fusion frames using $Q$-component preserving  duals \cite{ Hein}. Some  sufficient conditions  which the canonical dual fusion frame is an optimal dual or not optimal as a $Q$-component preserving dual  was presented in \cite{morilas}.
 In this work, we are interested to set this idea in the context of dual fusion frames introduced by P. G$\breve{\textrm{a}}$vru\c{t}a  \cite{Gav07}.  One of our aims  is to compare the effect of this approach  with $Q$-component preserving optimal duals.  In addition, we would like to work more with alternate dual fusion frames.

The organization of this article is as follows. In Section 2, the basic definitions and notations of  frame theory, fusion frame theory, optimal duals  and some of their fundamental results will be given.
In Section 3, we study the  differences between  optimal duals under two types of definitions of duality in this setting. This also shows our motivation for working with  dual fusion frames defined by  P. G$\breve{\textrm{a}}$vru\c{t}a. Then, in Section 4 we give some sufficient conditions for determining the existence and robustness of  optimal  dual fusion frames. Moreover, we discuss   the relation between local and global optimal duals, by a new notion called partial optimal duals.  We present some examples to show  the effect  of the  error rate when dual fusion frames are used to  reconstruction, in Section 5.

\section {Preliminaries and notations}
Let $\mathcal{H}$ be an $n$-dimensional   Hilbert space and $I_{m}=\{1, 2, ... ,m\}$. A family $F:=\{f_{i}\}_{i\in I_{m}}\subseteq\mathcal{H}$ is called a
\textit{frame} for $\mathcal{H}$ whenever ${\textit{span}}\{f_{i}\}_{i\in I_{m}}=\mathcal{H}$ so obviously $m\geq n$.
Three important operators associated with finite family $F$ are defined as follows. The \textit{synthesis operator} $\theta_{F}:
 l^{2}(I_{m})\rightarrow \mathcal{H}$ is defined by $\theta_{F}\lbrace c_{i}\rbrace= \sum_{i\in I_{m}}
c_{i}f_{i}$, the \textit{analysis operator} that is  the adjoint
of $\theta_{F}$,  given by $\theta_{F}^{*}f= \lbrace \langle f,f_{i}\rangle\rbrace_{i
\in I_{m}}$, mapping $\mathcal{H}$ into $l^{2}(I_{m})$. Also, the frame operator, given by  $S_{F}=
\theta_{F}\theta_{F}^{*}$  or equivalently $S_{F}f=\sum_{i\in I_{m}} \langle f,f_{i}\rangle f_{i}$. An easily argument shows that $F$ is a frame if and only if the frame operator is invertible.
The fact that $\mathcal{H}$ is finite dimensional   Hilbert space implies that the analysis and synthesis operators are continuous and specially the continuous function $f\mapsto \Vert \theta_{F}^{*}f\Vert$ is non-zero on compact unit sphere in $\mathcal{H}$.
Thus, $F=\{f_{i}\}_{i\in I_{m}}$ is a frame for $\mathcal{H}$ if and only if
 there exist  positive constants
 $A$, $ B$ such that
\begin{eqnarray}\label{Def frame}
A\|f\|^{2}\leq \Vert \theta_{F}^{*}f\Vert=\sum_{i\in I_{m}}|\langle f,f_{i}\rangle|^{2}\leq
B\|f\|^{2},\qquad (f\in \mathcal{H}).
\end{eqnarray}
The constants $A$ and $B$ are called the \textit{frame bounds}.  If $A=B$, the frame $F$ is
called  a \textit{tight frame}, and in the case of $A=B=1$
 it is a  \textit{Parseval frame}.
Applying a frame  $F$ any elements of $\mathcal{H}$ can be recovered from the frame coefficients $\{\langle f,f_{i}\rangle\}_{i\in I_{m}}$
\begin{eqnarray*}
f=\sum_{i\in I_{m}} \langle f,f_{i}\rangle S_{F}^{-1}f_{i}=\sum_{i\in I_{m}} \langle f,S_{F}^{-1}f_{i}\rangle f_{i}=\sum_{i\in I_{m}} \langle f,S_{F}^{-1/2}f_{i}\rangle S_{F}^{-1/2}f_{i}.
\end{eqnarray*}
Hence, $\{S_{F}^{-1/2}f_{i}\}_{i\in I_{m}}$ is a Parseval frame and $\{S_{F}^{-1}f_{i}\}_{i\in I_{m}}$ is a frame that is called the canonical dual.
A family $G:=\{g_i\}_{i\in I_{m}}\subseteq\mathcal{H}$ is called to be a
\textit{ dual}
 for $\{f_{i}\}_{i\in I_{m}}$ if $\theta_{G}\theta_{F}^{*}=I_{\mathcal{H}}$. It is well known that $\{g_i\}_{i\in I_{m}}$ is a dual  frame of $\{f_i\}_{i\in I_{m}}$ if and only if $g_{i} = S_{F}^{-1}f_{i}+u_{i}$, for all $i\in I_{m}$ where $\{u_i\}_{i\in I_{m}}$ satisfies
\begin{eqnarray}\label{dual relation}
\sum_{i\in I_{m}} \langle f, u_{i}\rangle f_{i} = 0, \quad (f\in \mathcal{H}).
\end{eqnarray}
Every   frame $F$ with linearly independent vectors   is called a Riesz basis and in case $F$ is not a Riesz basis it  is called overcomplete.
We refer the reader to  \cite{Chr08} for more information on frame theory.
 The optimal dual problem, one of the most important problems in   frame theory,  brings up the following problem:
let $F = \{f_{i}\}_{i\in I_{m}}$ be a frame for $n$-dimensional Hilbert space
$\mathcal{H}$, find a dual frame of $F$ that minimize  the reconstruction errors when erasures occur. If $G =
\{g_{i}\}_{i\in I_{m}}$ is a dual of $F$ and $\Lambda \subset I_{m}$, then
the error operator $E_{\Lambda}$ is defined by

\begin{eqnarray*}
E_{\Lambda} = \sum_{i\in \Lambda}g_{i} \otimes f_{i} = \theta_{G}D\theta^{*}_{F},
\end{eqnarray*}
where $D$ is a $m\times m$ diagonal
matrix with $d_{ii}=1$ for $i\in \Lambda$ and $0$ otherwise. Let
\begin{eqnarray}\label{01.optimal}
 d_{r}(F,G) = \max \{ \Vert \theta_{G}D\theta^{*}_{F}\Vert : D\in \mathcal{D}_{r}\} = \max\{\|E_{\Lambda}\| : \vert \Lambda \vert = r\}, \end{eqnarray}
in which $\vert \Lambda\vert$ is the cardinality of $\Lambda$, the norm used in (\ref{01.optimal}) is the operator norm, $1\leq r<m$ is a natural number and $\mathcal{D}_{r}$ is the set of all $m \times m$ diagonal matrices
with $r$ $1 ' s$ and $n-r$ $0 ' s$. Then $ d_{r}(F,G) $ is the largest possible error
when $r$-erasures occur. Indeed, $G$ is called an optimal dual frame of $F$
for $1$-erasure or $1$-loss optimal dual  if \begin{eqnarray}\label{1-erasure def}
 d_{1}(F,G) = \min \left\{  d_{1}(F,Y) : Y \textit{ is a dual of  }         F\right\}. \end{eqnarray}
Inductively, for $r>1$, a dual frame $G$ is called an \textit{optimal dual} of $F$  for
$r$-erasures ($r$-loss optimal dual) if it is optimal for $(r-1)$-erasures and
\begin{eqnarray*}
 d_{r}(F,G) = \min \left\{  d_{r}(F,Y) : Y \textit{ is a dual of  }         F\right\}. \end{eqnarray*}
Also, some studies on optimal dual problem have been done in which  the error operator was considered by   different measurements instead of the operator norm in (\ref{01.optimal}). See \cite{AAS, Saliha}. This comes from the fact that  dependent on  applications using of different measurements for the error rate can simplify computations and be more suitable.
In what follows, we prefer the Frobenius norm for fusion frame setting due to comparing our results with \cite{Hein}.

  Fusion frame theory is a fundamental mathematical theory   introduced in  \cite{Cas04} to model sensor networks perfectly.
Although, recent studies show
 that fusion frames provide effective frameworks  not only for modeling of sensor networks but also    for a variety of applications that cannot be modeled by discrete frames. In the following, we review basic definitions  of fusion frames.

 Let $\{W_i\}_{i\in I_{m}}$ be a family of closed subspaces of   $\mathcal{H}$ and $\{\omega_i\}_{i\in I_{m}}$ a family of
weights, i.e. $\omega_i>0$, $i\in I_{m}$. Then  $\{(W_i,\omega_i)\}_{i\in
I_{m}}$ is  called a \textit{fusion frame} for $\mathcal{H}$ if ${\textit{span}}\{W_{i}\}_{i\in I_{m}}=\mathcal{H}$ equivalently, there exist
constants $0<A\leq B<\infty$ such that
\begin{eqnarray}\label{Def. fusion}
A\|f\|^{2}\leq \sum_{i\in I_{m}}\omega_i^2\|\pi_{W_i}f\|^2\leq
B\|f\|^{2},\qquad (f\in \mathcal{H}),
\end{eqnarray}
where $\pi_{W_{i}}$ denotes the orthogonal projection from Hilbert space $\mathcal{H}$ onto a closed subspace $W_{i}$. The constants $A$ and $B$ are called the \textit{fusion frame
bounds}.
Also,  a fusion frame is called $A$-\textit{tight}, if $A=B$,   \textit{Parseval} if $A= B= 1$ in $(\ref{Def. fusion})$, \textit{$\omega$-uniform}  if  $\omega_{i}=\omega$ for all $i\in I_{m}$  and we abbreviate $1$- uniform fusion frames as $\{W_i\}_{i\in I_{m}}$.
A family of closed subspaces $\{W_i\}_{i\in I_{m}}$ is called an orthonormal basis for $\mathcal{H}$ when $\oplus_{i\in I_{m}} W_{i} = \mathcal{H}$. Furthermore,
the sequence  $\{(W_i,\omega_i)\}_{i\in
I_{m}}$ is called a \textit{Riesz fusion  basis} whenever it is a complete family  in  $\mathcal{H}$ and  there exist positive constants $A$, $B$ so that for every finite subset $J\subset I_{m}$ and arbitrary vector $f_{i}\in W_i$, we have
\begin{eqnarray*}\label{Def fusion}
A\sum_{i\in J}\| f_i\|^{2} \leq \left\| \sum_{i\in J} f_i\right\|^{2} \leq B\sum_{i\in J}\| f_i\|^{2}.
\end{eqnarray*}
Let $\{(W_i,\omega_i)\}_{i\in
I_{m}}$ be a  sequence of subspaces and consider the Hilbert space
\begin{eqnarray*}
\mathcal{W}:=\sum_{i\in I_{m}}\oplus W_{i} =\left\{\{f_i\}_{i\in I}:f_i\in W_i \right\}.
\end{eqnarray*}
The \textit{synthesis operator} $T_{W}\in B(\mathcal{W} ,\mathcal{H})$ given  by
\begin{equation*}
T_{W}\left(\{f_i\}_{i\in I_{m}}\right)=\sum_{i\in I_{m}}\omega_if_i,\qquad
\left(\{f_{i}\}_{i\in I_{m}}\in \mathcal{W}\right).
\end{equation*}
Its adjoint  $T_{W}^{*}\in B(\mathcal{H}, \mathcal{W})$, which is called the \textit{analysis
operator}, is obtained by
\begin{eqnarray*}
T_{W}^{*}(f)=\left\{\omega_{i}\pi_{W_{i}}(f)\right\}_{i\in I_{m}},\qquad (f\in
\mathcal{H}).\end{eqnarray*}
and the \textit{fusion frame operator}
$S_{W}\in B(\mathcal{H})$ is defined by $S_{W
}f=\sum_{i\in I_{m}}\omega_i^{2}\pi_{W_i}f$, which is a bounded,
invertible and positive operator \cite{Cas04}.

Throughout this paper, we
 use  $(W,w)$ to denote a fusion frame
$\lbrace (W_{i}, \omega_{i})\rbrace_{i\in I_{m}}$ in a finite dimensional Hilbert space $\mathcal{H}$, if there exists $i\in I_{m}$ so that $W_{i}\neq \mathcal{H}$ we call $(W,w)$ a non-trivial fusion frame and in this paper, we consider non-trivial case for all fusion frames.
Also, if $(W,w)$ is a fusion
frame and $F_{i}=\{f_{i,j}\}_{j\in J_{i}}$ is a frame for $W_{i}$ for
$i=1,2,... ,m$. Then $(W,w,F)$ is called a fusion frame system, where
$F=\{F_{i}\}_{i\in I_{m}}$. Let $\mathcal{H},\mathcal{K}$ be two   Hilbert spaces, we use of $B(\mathcal{H},\mathcal{K})$ for denoting of all bounded linear operators of $\mathcal{H}$ into $\mathcal{K}$ and
we abbreviate $B(\mathcal{H},\mathcal{H})$ by $B(\mathcal{H})$. Finally, we use of $\mathcal{L}_{T_{W}^{*}}$,  for the set of all left inverses of $T_{W}^{*}$,  $\Vert . \Vert_{F}$, for the Frobenius  norm  and $\Vert . \Vert$, for the operator  norm.

 Let $(W,w)$ be a fusion frame for Hilbert space $\mathcal{H}$ then for reconstruction of the elements of  $\mathcal{H}$
there are some approaches towards definition of
 dual fusion frames, one approach  was presented in \cite{ Hei14,Hein}.
\begin{defn}\label{Q dual}
Let  $(W,w)$ and   $(V,v)$ be two fusion frames for $\mathcal{H}$. Then $(V,v)$
 is called a  $Q$-dual of $(W,w)$ if there exists a linear operator $Q\in B(\mathcal{W},\mathcal{V})$ such that
\begin{eqnarray}
T_{V}QT^{*}_{W} = I_{\mathcal{H}}.
\end{eqnarray}
\end{defn}
Consider the operator $M_{J,W}:\mathcal{W}\rightarrow \mathcal{W}$, as $M_{J,W}\{f_{j}\}_{j\in I_{m}}=\{\chi_{J}(j)f_{j}\}_{j\in I_{m}}$, that $\chi_{J}$ is the characteristic function  and so clearly  $M_{J,W}$  is a self-adjoint operator. We simply write $M_{J}$ if it is clear to which $W$ we refer to. Also, we abbreviate $M_{\{j\},W}=M_{j,W}$ and $M_{\{j\}}=M_{j}$.
\begin{defn}
Let  $Q\in L(\mathcal{W},\mathcal{V})$
\begin{itemize}\item[(1)] If $QM_{j,W}\mathcal{W}\subseteq M_{j,V}\mathcal{V}$ for all $j\in I_{m}$ then $Q$ is called block-diagonal.
\item[(2)] If $QM_{j,W}\mathcal{W}= M_{j,V}\mathcal{V}$ for all $j\in I_{m}$ then $Q$ is called component preserving.
\end{itemize}
\end{defn}
In Definition \ref{Q dual} we say that $(V,v)$ is a $Q$-block-diagonal (component preserving) dual fusion frame of $(W,w)$  if $Q$ is block-diagonal (component preserving). To characterize component preserving dual fusion frames, the  authors in \cite{Hein} considered some notations as follows. Let $\mathcal{A}\in B(\mathcal{W}, \mathcal{H})$, and $v$ be a collection of weights, $V_{i}=\mathcal{A}M_{i}\mathcal{W}$, for all $i\in I_{m}$ and consider the linear transformation
\begin{eqnarray}
Q_{\mathcal{A},v}:\mathcal{W}\rightarrow \mathcal{V}, \quad Q_{\mathcal{A},v}\{f_{j}\}_{j\in I_{m}}=\left\{\dfrac{1}{\nu_{i}} \mathcal{A}M_{i}\{f_{j}\}_{j\in I_{m}}\right\}_{i\in I_{m}}.
\end{eqnarray}
Then by these notations for a given fusion frame we have a complete characterization of $Q$-component preserving dual fusion frames.
\begin{thm}\cite{Hein}\label{charac Qdual}
Let  $(W,w)$ be a fusion frame for $\mathcal{H}$. Then  $(V,v)$ is a $Q$-component preserving dual fusion frame of  $(W,w)$ if and only if $V_{i}=\mathcal{A}M_{i}\mathcal{W}$ for all $i\in I_{m}$ and $Q=Q_{\mathcal{A},v}$ for some $\mathcal{A}\in \mathcal{ L}_{T^{*}_{W}}$. Moreover, any element of $ \mathcal{ L}_{T^{*}_{W}}$ is of the form $T_{V}Q$ where $(V,v)$ is some $Q$-component preserving dual fusion frame of $(W,w)$.
\end{thm}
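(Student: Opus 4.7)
The plan is to split the biconditional into two directions and handle the ``Moreover'' statement as a short consequence. The central identification throughout is $\mathcal{A}=T_{V}Q$, which turns the duality equation $T_{V}QT_{W}^{*}=I_{\mathcal{H}}$ directly into the assertion that $\mathcal{A}$ is a left inverse of $T_{W}^{*}$, and in the other direction lets one recover $Q$ coordinate-by-coordinate from $\mathcal{A}$ by means of the projections $M_{i,W}$.

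For the sufficiency direction, I would start from $\mathcal{A}\in\mathcal{L}_{T_{W}^{*}}$ and weights $v=\{v_{i}\}_{i\in I_{m}}$, set $V_{i}:=\mathcal{A}M_{i}\mathcal{W}$ and $Q:=Q_{\mathcal{A},v}$, and verify three things in order. A direct computation gives
\begin{eqnarray*}
T_{V}Q_{\mathcal{A},v}\{f_{j}\}_{j\in I_{m}}=\sum_{i\in I_{m}}v_{i}\cdot\frac{1}{v_{i}}\mathcal{A}M_{i}\{f_{j}\}_{j\in I_{m}}=\mathcal{A}\{f_{j}\}_{j\in I_{m}},
\end{eqnarray*}
so $T_{V}Q=\mathcal{A}$ and hence $T_{V}QT_{W}^{*}=I_{\mathcal{H}}$. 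This forces $T_{V}$ to be surjective, which in finite dimensions yields $\textit{span}\{V_{i}\}=\mathcal{H}$ and thus the fusion frame property of $(V,v)$. Finally, $Q_{\mathcal{A},v}$ is component preserving directly from its definition, since $Q_{\mathcal{A},v}M_{j,W}\{f_{k}\}_{k\in I_{m}}$ is supported at coordinate $j$ with $j$-th entry $\frac{1}{v_{j}}\mathcal{A}M_{j}\{f_{k}\}_{k\in I_{m}}$, which ranges over $\frac{1}{v_{j}}V_{j}=V_{j}$ as $\{f_{k}\}_{k\in I_{m}}$ varies through $\mathcal{W}$.

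For the necessity direction, given a $Q$-component preserving dual $(V,v)$ of $(W,w)$, I set $\mathcal{A}:=T_{V}Q$, which immediately lies in $\mathcal{L}_{T_{W}^{*}}$. The component preservation identity $QM_{i,W}\mathcal{W}=M_{i,V}\mathcal{V}$ together with $T_{V}M_{i,V}\mathcal{V}=v_{i}V_{i}=V_{i}$ yields $\mathcal{A}M_{i}\mathcal{W}=V_{i}$. To show $Q=Q_{\mathcal{A},v}$, I would use the resolution of identity $\sum_{j}M_{j,W}=I_{\mathcal{W}}$ and component preservation to see that the $i$-th coordinate of $Q\{f_{k}\}_{k\in I_{m}}$ coincides with the $i$-th coordinate of $QM_{i,W}\{f_{k}\}_{k\in I_{m}}$; applying $T_{V}$ to this single-coordinate vector then gives $v_{i}\,(Q\{f_{k}\}_{k\in I_{m}})_{i}=T_{V}QM_{i,W}\{f_{k}\}_{k\in I_{m}}=\mathcal{A}M_{i}\{f_{k}\}_{k\in I_{m}}$, which is precisely the defining formula of $Q_{\mathcal{A},v}$. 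The ``Moreover'' statement is then immediate: given $\mathcal{A}\in\mathcal{L}_{T_{W}^{*}}$, fix any weights $v$, build $(V,v)$ and $Q=Q_{\mathcal{A},v}$ as in the sufficiency direction, and recall that $T_{V}Q=\mathcal{A}$ was already established there.

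The main obstacle I expect is bookkeeping rather than any deep argument: one has to keep track of the distinction between the subspace identities such as $QM_{j,W}\mathcal{W}=M_{j,V}\mathcal{V}$ and the pointwise identities such as $T_{V}M_{i,V}\{g_{j}\}_{j\in I_{m}}=v_{i}g_{i}$, and carry the weight factors $v_{i}$ through the definition of $Q_{\mathcal{A},v}$ so that the subspaces $V_{i}$ recovered from applying $T_{V}$ to the $i$-th coordinate of the range of $Q_{\mathcal{A},v}M_{i,W}$ agree with the subspaces $\mathcal{A}M_{i}\mathcal{W}$ used to define $(V,v)$ in the first place. Once the coordinate-wise structure of component preserving operators is made fully explicit via the $M_{i,W}$ and $M_{i,V}$, every remaining step reduces to an application of the left-inverse identity $\mathcal{A}T_{W}^{*}=I_{\mathcal{H}}$.
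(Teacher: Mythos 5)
Your proof is correct; the identification $\mathcal{A}=T_{V}Q$, the computation $T_{V}Q_{\mathcal{A},v}=\mathcal{A}$ via $\sum_{i}M_{i}=I_{\mathcal{W}}$, and the coordinate-wise recovery of $Q$ from $\mathcal{A}$ using block-diagonality all check out. Note that the paper itself offers no proof of this statement --- it is imported verbatim from \cite{Hein} --- and your argument is essentially the standard one given there, so there is nothing further to compare.
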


Now let $(W,w)$ be a fusion frame for $\mathcal{H}$ and  $(V,v)$ a $Q$-dual fusion frame of  $(W,w)$. Then every
$f\in \mathcal{H}$ can be reconstructed by $T_{V}QT_{W}^{*}f=f$. Suppose
that $J\subseteq  I_{m}$ and the data vectors corresponding to the
subspace $\{W_{i}\}_{i\in J}$ are erased. Then the reconstruction gives
$T_{V}QM_{ I_{m}\setminus J}T_{W}^{*}f$.  In \cite{Hein}, the authors presented some approaches for finding
those dual fusion frames of $(W,w)$ which are optimal for these situations. In the following we state their method in summary.

Consider a fix $r\in I_{m}$ and take $P_{r}^{m}:= \{J \subseteq
I_{m} : |J| = r\}$, also notice that $M_{J}=I_{\mathcal{W}}\setminus
M_{ I_{m}-J}$, where $\mathcal{W}=\sum_{i\in I_{m}}\oplus W_{i}$. In this
case, the worst case error is
\begin{eqnarray}\label{worst case}
 \|e(r,W,V)\|_{\infty} = \max_{J\in
P_{r}^{m}}\|T_{V}QM_{J}T_{W}^{*}\|_{F}. \end{eqnarray}
 The set of all $1$-loss
optimal dual fusion frames under this measure, $D_{1}^{\infty}(W,w)$, is defined as the set
of all  $Q$-duals  $(V,v)$ of $(W,w)$ such that
\begin{eqnarray*}
 e_{1}^{\infty}(W,w):=\|e(1,W,V)\|_{\infty}=\inf\left\{\|e(1,W,Z)\|_{\infty} : (Z,z) \quad\textit{is a $Q$-dual  of} \quad (W,w)\right\}
 \end{eqnarray*}
 and inductively, the set of all
$r$-loss optimal $Q$-dual fusion frames is defined as
\begin{eqnarray*} D_{r}^{\infty}(W,w)= \{(V,v)\in D_{r-1}^{\infty
}(W,w) :   \|e(r,W,V)\|_{\infty}
= e_{r}^{\infty}(W,w)\}.
\end{eqnarray*}
In  \cite{Hein}, the authors considered this concept for $Q$-component preserving dual fusion frames and showed that if $(W,w)$ is a fusion frame
 for $\mathcal{H}$, then
  \begin{eqnarray*} \mathfrak{A}= \lbrace \mathcal{A}\in \mathcal{ L}_{T^{*}_{W}}:  ,
\max_{i\in I_{m}}\Vert \mathcal{A}M_{i}T^{*}_{W}\Vert_{F}
 = min_{B\in \mathcal{ L}_{T^{*}_{W}}} \max_{i\in I_{m}}\Vert
BM_{i}T^{*}_{W}\Vert_{F} \rbrace,
\end{eqnarray*}
is a non empty, compact and convex set. Then by this fact and using Theorem \ref{charac Qdual}  they could present some results under which the canonical dual is the unique optimal as a $Q$-component preserving dual fusion frame. These results in \cite{morilas}  was extended for the case that  the canonical dual,  as a $Q$-component preserving dual, is optimal or not optimal  for probabilistic erasures.

The other  approach to define  dual fusion frames was defined by P.
G$\breve{\textrm{a}}$vru\c{t}a in \cite{Gav07}. In the sense that,
a  sequence $(V,v)$ of subspaces is
called a \textit{dual} fusion frame of $(W,w)$ if
\begin{eqnarray}\label{Def:alt operatory}
T_{V}\phi_{vw}T_{W}^{*}=\sum_{i\in I_{m}}\omega_{i}\nu_{i}\pi_{V_i}S_{W}^{-1}\pi_{W_i}=I_{\mathcal{H}},
\end{eqnarray}
where $\phi_{vw}:\mathcal{W}\rightarrow\mathcal{V}$ is defined as
\begin{eqnarray*}
\phi_{vw}\{f_{i}\}_{i\in I}=\{\pi_{V_{i}}S_{W}^{-1}f_{i}\}_{i\in I_{m}}, \quad \left(\{f_{i}\}_{i\in I}\in \mathcal{W}\right).
\end{eqnarray*}
The family $\{(S_{W}^{-1}W_i,\omega_i)\}_{i\in I_{m}}$  is called the \textit{canonical dual} of
$(W,w)$.

In this work, we are going to continue   optimal dual  problem specially for alternate dual fusion frames in  this concept.
\begin{rem} \label{2.9..}
It is worth to note that, these dual fusion frames  can be considered as a $Q$-block diagonal dual. Indeed, using $(\ref{Def:alt operatory})$ we observe that
 \begin{eqnarray*}
\phi_{vw}M_{j}\mathcal{W}\subseteq M_{j}\mathcal{V},
\end{eqnarray*}
for all $j\in I_{m}$ so $\phi_{vw}$  is block diagonal.
However,  Example \ref{ex01} shows that  $\phi_{vw}$  is not necessarily component preserving.
\end{rem}


\section{Opposite relations among optimal dual and $P$-optimal dual fusion frames}

In this section, we survey similarities and specially differences between  optimal duals and $P$-optimal duals. Throughout the paper we will work with the worst case of error as in $(\ref{worst case})$, so we define our consideration of optimality  as follows;
 \begin{defn} \label{1.4}
Let  $(W,w)$  be a fusion frame
 for $\mathcal{H}$ and $(V,v)\in D_{W}$. We say that  $(V,v)$ is $1$-loss optimal dual of $(W,w)$   whenever
\begin{eqnarray*}
\max_{i\in I_{m}} \Vert  \omega_{i}\nu_{i}\pi_{V_{i}}S_{W}^{-1}\pi_{W_{i}}\Vert_{F}=\inf\left\{\max_{i\in I_{m}} \Vert  \omega_{i}z_{i}\pi_{Z_{i}}S_{W}^{-1}\pi_{W_{i}}\Vert_{F} : (Z,z)\in D_{W}\right\}
\end{eqnarray*}
\end{defn}
Inductively,  we can extend the above definition for any $r$-erasures. More precisely, $(V,v)$ is called an optimal dual of $(W,w)$ for any $r$-erasures, whenever it is  an optimal dual of $(W,w)$ for any $(r-1)$-erasures and
\begin{eqnarray}\label{optim}
\max_{J\in
P_{r}^{m}}\|T_{V}\phi_{vw}M_{J}T_{W}^{*}\|_{F}=\inf\left\{\max_{J\in
P_{r}^{m}}\|T_{Z}\phi_{zw}M_{J}T_{W}^{*}\|_{F}: (Z,z)\in D_{W}\right\}. \end{eqnarray}
As we mentioned in Remark \ref{2.9..},
 $ \phi_{vw}$  is block diagonal but  not necessarily component preserving.  In this respect, if a dual is optimal in the set of  all  $Q$-component preserving dual fusion frames we call it $P$-optimal dual and it is called an optimal dual if it is
 optimal in the notion $(\ref{optim})$.
To simplify the notations,  we use of $D_{W}$ and $OD^{r}_{W}$ $(OD_{W})$ to denote the set of all  duals of $W$ and optimal duals of $W$ under $r$-erasures ($1$-erasure ) under dual definition as in  $(\ref{Def:alt operatory})$, respectively. The following lemmas are useful for later use.
\begin{lem}\label{lem3}\cite{Gav07}
Let   $\mathcal{H}$ be a Hilbert spaces and $U\in B(\mathcal{H})$. Also, let $V$ be a closed subspace of $\mathcal{H}$. Then
$\pi_{V}U^{*}=\pi_{V}U^{*}\pi_{\overline{UV}}$.
Moreover, the following are equivalent
\begin{itemize}
\item[(i)]$U\pi_{V} =\pi_{\overline{UV}}U.$
\item[(ii)]$U^{*}U V\subseteq V.$
\end{itemize}
\end{lem}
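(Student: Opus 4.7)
The plan is to handle the identity $\pi_{V}U^{*}=\pi_{V}U^{*}\pi_{\overline{UV}}$ first, and then use it together with an adjoint trick to establish the equivalence (i)$\iff$(ii). For the identity, the key observation is that $\pi_{V}U^{*}$ annihilates $(\overline{UV})^{\perp}$: if $h\perp\overline{UV}$, then for every $v\in V$ one has $\langle U^{*}h,v\rangle=\langle h,Uv\rangle=0$, so $U^{*}h\in V^{\perp}$ and therefore $\pi_{V}U^{*}h=0$. Writing $I=\pi_{\overline{UV}}+\pi_{(\overline{UV})^{\perp}}$ and applying $\pi_{V}U^{*}$ to both sides then yields the identity.

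For (i)$\Rightarrow$(ii), I would take adjoints in (i) to obtain $\pi_{V}U^{*}=U^{*}\pi_{\overline{UV}}$. Then for any $v\in V$, since $Uv\in\overline{UV}$ forces $\pi_{\overline{UV}}Uv=Uv$, the computation $U^{*}Uv=U^{*}\pi_{\overline{UV}}Uv=\pi_{V}U^{*}Uv$ lands in $V$, which is precisely (ii).

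For (ii)$\Rightarrow$(i), I would verify the operator identity $U\pi_{V}=\pi_{\overline{UV}}U$ by checking it separately on $V$ and on $V^{\perp}$. On $V$ both sides act as $U$, since $Uv\in UV\subseteq\overline{UV}$ gives $\pi_{\overline{UV}}Uv=Uv$. On $V^{\perp}$ the left side is zero, so I must show $\pi_{\overline{UV}}Uv=0$ whenever $v\perp V$. This is where (ii) enters: for any $w\in V$, the assumption $U^{*}Uw\in V$ yields $\langle Uv,Uw\rangle=\langle v,U^{*}Uw\rangle=0$, so $Uv\perp UV$ and by continuity $Uv\perp\overline{UV}$, giving $\pi_{\overline{UV}}Uv=0$.

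Each step reduces to a short inner-product manipulation, so there is no deep obstacle. The one conceptual point worth emphasizing is that condition (ii) encodes exactly the statement that $U$ maps $V^{\perp}$ into $(\overline{UV})^{\perp}$; once this is recognized, the intertwining relation in (i) becomes a routine case analysis along the orthogonal decomposition $\mathcal{H}=V\oplus V^{\perp}$, and the opening identity $\pi_{V}U^{*}=\pi_{V}U^{*}\pi_{\overline{UV}}$ is just the adjoint shadow of this observation.
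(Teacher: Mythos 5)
Your proof is correct. The paper itself gives no proof of this lemma --- it is quoted from \cite{Gav07} --- and your argument (the annihilation of $(\overline{UV})^{\perp}$ by $\pi_{V}U^{*}$ for the first identity, then the adjoint trick for (i)$\Rightarrow$(ii) and the case analysis along $\mathcal{H}=V\oplus V^{\perp}$ for (ii)$\Rightarrow$(i)) is the standard one, essentially matching the argument in that reference.
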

In the following lemma, we suspect that the set $\mathfrak{D}$ is compact. Indeed it is obviously bounded but maybe it is not closed in general. However, the compactness is a secondary problem, we only want to show that $OD_{W}$ is  non-empty.
\begin{lem}\label{existence.R}
Let $(W,w)$ be a  fusion
frame for finite dimensional Hilbert space $\mathcal{H}$. Then $OD^{r}_{W}$ is non-empty under any $r$-erasures.
\end{lem}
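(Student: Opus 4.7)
The plan is to argue by induction on $r$: once the base case $r=1$ is settled, for $r>1$ the same minimizing-sequence/compactness argument, now run on the objective $\max_{J\in P_r^m}\|T_V\phi_{vw}M_JT_W^*\|_F$ and restricted to the non-empty (by the inductive hypothesis) set $OD_W^{r-1}$, produces a member of $OD_W^{r}$. So the substance of the lemma is the base case.

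For $r=1$, consider $F_1:D_W\to[0,\infty)$ defined by $F_1(V,v)=\max_{i\in I_m}\|L_i^{(V,v)}\|_F$ with $L_i^{(V,v)}:=\omega_i\nu_i\pi_{V_i}S_W^{-1}\pi_{W_i}$. The canonical dual lies in $D_W$, so the infimum $c:=\inf_{D_W}F_1$ is finite. I would fix a minimizing sequence $(V^{(n)},v^{(n)})$ and write $L_i^{(n)}:=L_i^{(V^{(n)},v^{(n)})}$; then $\sum_iL_i^{(n)}=I_{\mathcal{H}}$ and eventually $\|L_i^{(n)}\|_F\le c+1$. Since each $\dim V_i^{(n)}\in\{0,1,\dots,\dim\mathcal{H}\}$ ranges over a finite set, pass to a subsequence where $\dim V_i^{(n)}=d_i$ is constant for every $i$; then compactness of the Grassmannian $\mathrm{Gr}(d_i,\mathcal{H})$ together with the uniform bound on $L_i^{(n)}$ in the finite-dimensional space $B(\mathcal{H})$ permits a further extraction on which $\pi_{V_i^{(n)}}\to\pi_{V_i^\infty}$ in operator norm and $L_i^{(n)}\to L_i^\infty$. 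Passing to the limit gives $\sum_iL_i^\infty=I_{\mathcal{H}}$ and $\max_i\|L_i^\infty\|_F\le c$.

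The core obstacle, explicitly flagged by the authors in the sentence preceding the lemma, is that $D_W$ need not be closed. At an index $i$ with $\pi_{V_i^\infty}S_W^{-1}\pi_{W_i}\ne 0$ the uniform bound on $\|L_i^{(n)}\|_F$ confines $\nu_i^{(n)}$ to a compact positive interval, and one takes a positive limit $\nu_i^\infty$ directly, recovering $L_i^\infty=\omega_i\nu_i^\infty\pi_{V_i^\infty}S_W^{-1}\pi_{W_i}$; but at a degenerate index, where $\pi_{V_i^\infty}S_W^{-1}\pi_{W_i}=0$ while $\nu_i^{(n)}\to\infty$ with $L_i^\infty\ne 0$, the pair $(V_i^\infty,\nu_i^\infty)$ fails to reproduce $L_i^\infty$ and this is where the hard work sits. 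To repair such indices I would exploit that $L_i^{(n)}=\pi_{V_i^{(n)}}L_i^{(n)}\pi_{W_i}$ for every $n$, so in the limit $\mathrm{range}(L_i^\infty)\subseteq V_i^\infty$ and $L_i^\infty$ factors on the right through $S_W^{-1}\pi_{W_i}$; invoking Lemma \ref{lem3} with $U=S_W^{-1/2}$ one can select a subspace $\widetilde V_i^\infty\supseteq\mathrm{range}(L_i^\infty)$ and a weight $\widetilde\nu_i^\infty>0$ satisfying $\omega_i\widetilde\nu_i^\infty\pi_{\widetilde V_i^\infty}S_W^{-1}\pi_{W_i}=L_i^\infty$. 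Substituting these repaired components preserves the identity $\sum_iL_i^\infty=I_{\mathcal{H}}$ and leaves every $\|L_i^\infty\|_F$ unchanged, so the resulting $(V^\infty,v^\infty)$ lies in $D_W$ with $F_1(V^\infty,v^\infty)\le c$; hence $OD_W\ne\emptyset$, which closes the base case and, with the induction of the first paragraph, the proof.
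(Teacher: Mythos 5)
Your overall architecture (induct on $r$; for the base case run a minimizing sequence over a bounded set of error operators and extract a limit by finite-dimensional compactness) is the same as the paper's, which works with the set $\mathfrak{D}=\{T_Y\phi_{yw}: (Y,y)\in D_W,\ \Vert T_Y\phi_{yw}\Vert_{w_1}\le\Vert S_W^{-1}T_W\Vert_{w_1}\}$ and simply asserts it is compact so that the norm attains its infimum. You have correctly located the crux --- the authors themselves concede just before the lemma that $\mathfrak{D}$ ``is obviously bounded but maybe it is not closed in general'' --- and your Grassmannian extraction and the case split on whether $\nu_i^{(n)}$ stays bounded is a genuine attempt to close that hole, which the paper does not make.

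However, your repair step at a degenerate index does not work, and this is an essential gap. You claim that whenever $L_i^{(n)}=\omega_i\nu_i^{(n)}\pi_{V_i^{(n)}}S_W^{-1}\pi_{W_i}\to L_i^\infty$ with $\nu_i^{(n)}\to\infty$, one can find $\widetilde V_i^\infty$ and $\widetilde\nu_i^\infty>0$ with $\omega_i\widetilde\nu_i^\infty\pi_{\widetilde V_i^\infty}S_W^{-1}\pi_{W_i}=L_i^\infty$; the cited Lemma~\ref{lem3} gives no such thing, and the statement is false because the image of the map $(\nu,V)\mapsto\nu\,\pi_V A$ (for a fixed nonzero $A$) is not closed. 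Any operator of the form $M=\nu\pi_V A$ satisfies $\langle M g, A^*g\rangle \cdot \nu = \Vert \nu\pi_V Ag\Vert^2=\Vert Mg\Vert^2$, so $Mg\neq 0$ forces $\langle Mg,A^{*}g\rangle>0$; but a limit along $\nu^{(n)}\to\infty$ can violate this. Concretely, in $\mathbb{R}^2$ take $A=e_1\otimes e_1$, $V^{(n)}=\mathrm{span}\{(\sin\theta_n,\cos\theta_n)\}$ with $\theta_n\to 0$ and $\nu^{(n)}=1/\sin\theta_n$: then $\nu^{(n)}\pi_{V^{(n)}}A\to e_2\otimes e_1$, which sends $e_1\mapsto e_2\neq 0$ while $\langle e_2,e_1\rangle=0$, hence is not $\nu\pi_V A$ for any $\nu>0$ and any subspace $V$. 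So the limit of your minimizing sequence need not correspond to any pair $(\widetilde V_i^\infty,\widetilde\nu_i^\infty)$, the constructed family need not lie in $D_W$, and the base case is not established. In short: you reproduced the paper's strategy and correctly diagnosed its weak point, but the new ingredient you introduce to fix it is invalid as stated, so the proof still has the same unfilled hole as the original.
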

\begin{proof}
We first  show that  $OD_{W}$ is a non-empty set. To this  end, it is sufficient to prove that
\begin{eqnarray*}
\mathcal{O}=\left\{T_{Y}\phi_{yw}:  (Y,y)\in D_{W},   \max_{i\in I_{m}}\Vert T_{Y}\phi_{yw}M_{i}T^{*}_{W} \Vert_{F}=\inf_{ (Z,z)\in D_{W}} \max_{i\in I_{m}}\Vert T_{Z}\phi_{zw}M_{i}T^{*}_{W} \Vert_{F}\right\}
\end{eqnarray*}
is a non-empty set. The mapping $\Vert .\Vert_{w_{r}}: B(\mathcal{W},\mathcal{H})\rightarrow \mathbb{R}^{+}$  defined as $\Vert A\Vert_{w_{r}}= \max_{J\in P_{r}^{m}}\Vert AM_{J}T^{*}_{W} \Vert_{F}$ is a norm on $B(\mathcal{W},\mathcal{H})$ by Theorem 3.1 of \cite{morilas}.  On the other hand,
\begin{eqnarray*}
\mathfrak{D}=\{T_{Y}\phi_{yw}:  (Y,y)\in D_{W},   \Vert T_{Y}\phi_{yw}\Vert_{w_{1}}\leq \Vert S_{W}^{-1}T_{W} \Vert_{w_{1}}\}
\end{eqnarray*}
is a non-empty and  compact subset of $B(\mathcal{W},\mathcal{H})$. Hence,  $\Vert .\Vert_{w_{1}}$ attains its infimum  on $\mathfrak{D}$, i.e., there exists a dual fusion frame  $(V,v)$ of $(W,w)$ so that
\begin{eqnarray*}
\Vert T_{V}\phi_{vw}\Vert_{w_{1}}=\inf_{ T_{Y}\phi_{yw}\in \mathfrak{D}}\Vert T_{Y}\phi_{yw}\Vert_{w_{1}}.
\end{eqnarray*}
Since $OD_{W} \subseteq \{(Y,y) : T_{Y}\phi_{yw}\in \mathfrak{D}\}$,  therefore $(V,v)\in OD_{W}$, i.e., $\mathcal{O}\neq \emptyset$. The rest of the proof  is done by a simple induction. Let the set of all optimal dual fusion frames for any $(r-1)$-erasures,$OD_{W}^{r-1}$, is non-empty. Then a similar argument as in the above shows that the  set
\begin{eqnarray*}
\left\{T_{Y}\phi_{yw}:  (Y,y)\in OD_{W}^{r-1},  \Vert T_{Y}\phi_{yw}\Vert_{w_{r}}=\inf_{ (Z,z)\in D_{W}} \Vert T_{Z}\phi_{zw}\Vert_{w_{r}}\right\}
\end{eqnarray*}
is non-empty and consequently $OD^{r}_{W}$ is non-empty.
\end{proof}

We can associate to  every $Q$-block diagonal dual fusion frame $(V,v)$ of $(W,w)$ a $Q$-preserving dual fusion frame. See Remark 3.6 of \cite{Hein}. In the following, we  state this result for dual fusion frames as mutual relation.
\begin{lem}\label{lem0}
Let $(W,w)$ and $(V,v)$  be two fusion
frames for $\mathcal{H}$. Then $(V,v)$ is a dual fusion frame of $(W,w)$ if and only if $X:=\{(\pi_{V_{i}}S_{W}^{-1}W_{i},\nu_{i})\}_{i\in I_{m}}$ is a $\phi_{vw}$- component preserving dual of $(W,w)$.
\end{lem}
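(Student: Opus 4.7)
The plan is to reduce the lemma to Theorem \ref{charac Qdual} by exhibiting the correct left inverse of $T_W^*$. First, I would set $\mathcal{A} := T_V\phi_{vw} \in B(\mathcal{W},\mathcal{H})$ and observe that the defining condition (\ref{Def:alt operatory}) reads precisely $\mathcal{A} T_W^* = I_{\mathcal{H}}$, so that $(V,v)$ is a dual fusion frame of $(W,w)$ if and only if $\mathcal{A} \in \mathcal{L}_{T_W^*}$.

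Next, I would compute, for each $i\in I_m$ and $\{f_j\}_{j\in I_m}\in \mathcal{W}$,
\begin{eqnarray*}
\mathcal{A} M_{i,W}\{f_j\}_{j\in I_m} = T_V\{\chi_{\{i\}}(j)\pi_{V_j}S_W^{-1}f_j\}_{j\in I_m} = \nu_i\pi_{V_i}S_W^{-1}f_i.
\end{eqnarray*}
Letting $\{f_j\}_j$ vary over $\mathcal{W}$ yields $\mathcal{A} M_{i,W}\mathcal{W} = \pi_{V_i}S_W^{-1}W_i = X_i$, so the subspaces that Theorem \ref{charac Qdual} associates to $\mathcal{A}$ (with weights $\nu$) are exactly those defining $X$. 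Substituting this into the formula defining $Q_{\mathcal{A},\nu}$ gives
\begin{eqnarray*}
Q_{\mathcal{A},\nu}\{f_j\}_{j\in I_m} = \left\{\frac{1}{\nu_i}\mathcal{A}M_{i,W}\{f_j\}_j\right\}_{i\in I_m} = \{\pi_{V_i}S_W^{-1}f_i\}_{i\in I_m} = \phi_{vw}\{f_j\}_{j\in I_m},
\end{eqnarray*}
so $Q_{\mathcal{A},\nu}=\phi_{vw}$. With these two identifications in place, the ``if and only if'' part of Theorem \ref{charac Qdual}, applied to $\mathcal{A}$, says precisely that $\mathcal{A}\in\mathcal{L}_{T_W^*}$ is equivalent to $X$ being a $\phi_{vw}$-component preserving dual of $(W,w)$, which is the assertion of the lemma.

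The main potential obstacle is not a computation but a bookkeeping point: $\phi_{vw}$ is originally defined with codomain $\mathcal{V}$, whereas the component-preserving framework of Theorem \ref{charac Qdual} requires an operator into $\mathcal{X}=\sum_{i\in I_m}\oplus X_i$. Since $X_i = \pi_{V_i}S_W^{-1}W_i \subseteq V_i$, the componentwise output of $\phi_{vw}$ already lies in $\mathcal{X}$, so $\phi_{vw}$ may be viewed unambiguously as an operator $\mathcal{W}\to\mathcal{X}$; the component-preservation equality $\phi_{vw}M_{i,W}\mathcal{W}=M_{i,X}\mathcal{X}$ is then immediate from the displayed computation, and the agreement $T_X\phi_{vw}=T_V\phi_{vw}$ (on $\mathcal{X}$-valued outputs) ensures that the two reconstruction identities $T_X\phi_{vw}T_W^*=I_{\mathcal{H}}$ and $T_V\phi_{vw}T_W^*=I_{\mathcal{H}}$ really do coincide.
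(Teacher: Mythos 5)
Your proposal is correct and follows essentially the same route as the paper: identify $\mathcal{A}=T_{V}\phi_{vw}$, verify $\mathcal{A}M_{i}\mathcal{W}=\pi_{V_{i}}S_{W}^{-1}W_{i}$ and $Q_{\mathcal{A},\nu}=\phi_{vw}$, invoke the characterization of component preserving duals for one direction, and use $T_{X}\phi_{vw}=T_{V}\phi_{vw}$ (since $X_{i}\subseteq V_{i}$ with the same weights) for the converse. You are in fact somewhat more explicit than the paper, which states the two identifications without computation and cites Remark 3.6 of the Heineken--Morillas paper rather than the theorem as restated here.
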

\begin{proof}
Suppose $(V,v)$ is a dual fusion frame of  $(W,w)$. So   $\mathcal{A}=T_{V}\phi_{vw}$ is a left inverse of $T_{W}^{*}$, $Q_{\mathcal{A},v}=\phi_{vw}$ and  $\mathcal{A}M_{i}\mathcal{W}=\pi_{V_{i}}S_{W}^{-1}W_{i}$, for all $i\in I_{m}$. Thus,
 by
Remark 3.6 of \cite{Hein} we imply that  $X$ is a $\phi_{vw}$-component preserving dual of $(W,w)$.  Conversely, if  $X$ is  a $\phi_{vw}$- component preserving dual of $(W,w)$ then $T_{X}\phi_{vw}\in \mathcal{ L}_{T^{*}_{W}}$ and
\begin{eqnarray*}
T_{X}\{\pi_{V_{i}}S_{W}^{-1}f_{i} \}_{I_{m}}=\sum_{i\in I_{m}}\nu_{i}\pi_{V_{i}}S_{W}^{-1}f_{i}=T_{V}\{\pi_{V_{i}}S_{W}^{-1}f_{i} \}_{I_{m}}, \quad (\{f_{i}\}_{i\in I_{m}}\in \mathcal{W}),
\end{eqnarray*}
Thus, $T_{V}\phi_{vw}T^{*}_{W}=T_{X}\phi_{vw}T^{*}_{W}=I_{\mathcal{H}}$ and this  completes the proof.
\end{proof}
Applying the above lemma, if the canonical dual is a $P$-optimal dual then it is an optimal dual fusion frame. Indeed,  for every dual fusion frame  $(V,v) $ of $(W,w)$
\begin{eqnarray}\label{3.2n}
\max_{i\in I_{m}}\Vert T_{V}\phi_{vw}M_{i}T^{*}_{W} \Vert_{F}=
\max_{i\in I_{m}}\Vert T_{X}\phi_{vw}M_{i}T^{*}_{W} \Vert_{F} \geq
\max_{i\in I_{m}}\Vert \omega_{i}^{2}S_{W}^{-1}\pi_{W_{i}}\Vert_{F},
\end{eqnarray}
where $X=\{(\pi_{V_{i}}S_{W}^{-1}W_{i},\nu_{i})\}_{i\in I_{m}}$.
Similarly, if  $(V,v) $ is a $p$-optimal dual, as a  $\phi_{vw}$-preserving dual, of $(W,w)$ then  $(V,v) \in OD_{W}$.
 However, there are several essential differences between  optimal dual and $P$- optimal dual fusion frames. These differences are due to the fact that a component preserving dual is not necessarily a dual fusion frame and vice versa. For a simple example we consider a special case of Example 6.3 in  \cite{Hein}. Let $\mathcal{H}=\mathbb{R}^{3}$, $W_{1}=(1,0,0)^{\perp}$, $W_{2}=(0,1,0)^{\perp}$ and $\omega_{1}=\omega_{2}=1$. Then $W:=\{(W_{i},\omega_{i})\}_{i=1}^{2}$ is a fusion frame for  $\mathcal{H}$. Put $V_{1}={\textit{span}}\{(0,1,0),(1,2,-1/2)\}$, $V_{2}={\textit{span}}\{(1,0,0),(-1,-2,3/2)\}$ and $\nu_{1}=\nu_{2}=1$. Then the mapping $\mathcal{A}:\sum_{i=1}^{2}\oplus W_{i}\rightarrow \mathcal{H}$ given by
\begin{eqnarray*}
\mathcal{A}((0,x_{2},x_{3}),(y_{1},0,y_{3}))=(x_{3}+y_{1}-y_{3}, x_{2}+2x_{3}-2y_{3}  ,-1/2x_{3}+3/2y_{3} )
\end{eqnarray*}
is a left inverse of $T_{W}^{*}$ and
$V:=\{(V_{i},\nu_{i})\}_{i=1}^{2}$ is a $Q_{\mathcal{A},v}$-preserving dual of $W$. However, a straightforward computation shows that $V$ is not a dual fusion frame of $W$. More precisely,
$S_{W}^{-1}(a,b,c)=(a,b,c/2)$, for all $(a,b,c)\in \mathbb{R}^{3}$ and so
\begin{eqnarray*}
\sum_{i=1}^{2}\pi_{V_{i}}S_{W}^{-1}\pi_{W_{i}}(a,b,c)=\left(a-\dfrac{c}{5}, b-\dfrac{6c}{25}, \dfrac{7c}{25}\right).
\end{eqnarray*}
Using these discussions one implies that if  a dual fusion frame is  optimal dual then  it is not necessarily a $P$-optimal dual and vice versa.
As mentioned above, only under a limited and  special condition if a dual fusion frame is $P$-optimal dual then it is also an optimal dual. However, even in this case,  the uniqueness cannot be  preserved  in general. See Example \ref{ex01}.  In the next remark we observe more cases of a fusion frame $W$ where $\vert OD_{W}\vert\geq 2$.

\begin{rem}\label{lemun}
Assume that $(W,w)$ is a  fusion
frame for $\mathcal{H}$.  Applying Lemma \ref{existence.R}
  $OD_{W}\neq \emptyset$ for any $r$-erasures.
 So, let $(V,v)\in OD^{r}_{W}$.  If  there exists $ i\in I_{m}$ so that
$0\neq (S_{W}^{-1}W_{i})^{\perp}\subseteq V_{i}$ then we confront two cases. If
$(S_{W}^{-1}W_{i})^{\perp}= V_{i}$ we take $Z_{i}=\{0\}$ and $Z_{j}=V_{j}$ for $j\neq i$. Then obviously  $(Z,v)\in OD^{r}_{W}$.  Also, in case $(S_{W}^{-1}W_{i})^{\perp}\subset V_{i}$ we  have that  $V_{i}=(S_{W}^{-1}W_{i})^{\perp} \oplus (S_{W}^{-1}W_{i}\cap V_{i})$. Consider $Z_{i}=S_{W}^{-1}W_{i}\cap V_{i}$ and $Z_{j}=V_{j}$ for $j\neq i$, then  $(Z,v)\in OD^{r}_{W}$.

Moreover, if
 $V_{i}^{\perp}\cap (S_{W}^{-1}W_{i})^{\perp} \neq \{0\}$ for some  $i\in I_{m}$. Then we can choose a non zero element $u\in V_{i}^{\perp}\cap (S_{W}^{-1}W_{i})^{\perp}$. Take $Z_{i}=V_{i}\oplus \textit{span}\{u\}$ and $Z_{j}=V_{j}$ for $j\neq i$. Then  $(Z,v)\in D_{W}$. Moreover,
\begin{eqnarray*}
\max_{i\in J}\Vert T_{V}\phi_{vw}M_{J}T^{*}_{W} \Vert_{F}=\max_{i\in J}\Vert T_{Z}\phi_{zw}M_{J}T^{*}_{W} \Vert_{F},
\end{eqnarray*}
for every $J\in P_{r}^{m}$. Thus $(Z,v)\in OD^{r}_{W}$ and is different from $(V,v)$.
\end{rem}
By the above explanations we   obtain the following results that gives sufficient conditions for finding $1$-loss  optimal dual fusion frames.
\begin{thm}\label{finalcor}
 Let $(W,w)$ be a fusion
frame for $\mathcal{H}$. Consider
\begin{eqnarray*}
c=\max\{\omega_{i}^{2}\Vert S_{W}^{-1}\pi_{W_{i}} \Vert_{F}, i\in I_{m}\}
\end{eqnarray*}
 $\Lambda_{1}=\{i\in I_{m} : \omega_{i}^{2}\Vert S_{W}^{-1}\pi_{W_{i}} \Vert_{F}=c \}$, $\Lambda_{2}=I_{m}\setminus \Lambda_{1}$ and $\mathcal{H}_{j}=\operatorname{span} \cup_{i\in \Lambda_{j}} W_{i}$, $j=1,2$. If $\mathcal{H}_{1}\cap \mathcal{H}_{2}=\{0\}$ and $ \lbrace (W_{i}, \omega_{i})\rbrace_{i\in \Lambda_{2}}$ is a Riesz fusion basis for $\mathcal{H}_{2}$, then $(S_{W}^{-1}W,w)\in OD_{W}$,  but not the unique optimal one.
\end{thm}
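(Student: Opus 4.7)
The plan is to establish the two assertions of Theorem~\ref{finalcor} in sequence: first, that $(S_{W}^{-1}W,w)$ achieves the 1-loss optimal value, and second, that it is not unique. For the value, a direct computation gives the 1-erasure error of the canonical dual: since $S_{W}^{-1}\pi_{W_i}\mathcal{H}\subseteq S_{W}^{-1}W_i$, the orthogonal projection $\pi_{S_{W}^{-1}W_i}$ acts as the identity on this range, so $\omega_{i}^{2}\pi_{S_{W}^{-1}W_i}S_{W}^{-1}\pi_{W_i}=\omega_{i}^{2}S_{W}^{-1}\pi_{W_i}$. Taking the maximum over $i$ yields $\max_{i\in I_m}\omega_{i}^{2}\|S_{W}^{-1}\pi_{W_i}\|_F=c$, so the canonical dual's worst-case 1-erasure error is exactly $c$.

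For optimality, for every dual $(V,v)\in D_W$, Lemma~\ref{lem0} provides the associated $\phi_{vw}$-component preserving dual $X=\{(\pi_{V_i}S_W^{-1}W_i,\nu_i)\}_{i\in I_m}$ with $T_V\phi_{vw}=T_X\phi_{vw}$, so the 1-erasure errors of $(V,v)$ and $X$ agree. By the inequality~\eqref{3.2n}, it therefore suffices to establish P-optimality of the canonical dual: that $S_W^{-1}T_W$ minimizes $\max_i\|\mathcal{A}M_iT_W^*\|_F$ over $\mathcal{A}\in\mathcal{L}_{T_W^*}$. I would decompose $\mathcal{A}\{f_j\}=\sum_j\mathcal{A}_jf_j$ with $\mathcal{A}_j\colon W_j\to\mathcal{H}$, so $\sum_j\omega_j\mathcal{A}_j\pi_{W_j}=I_\mathcal{H}$ and $\|\mathcal{A}M_iT_W^*\|_F=\omega_i\|\mathcal{A}_i\|_F$. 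Proceeding by contradiction, assume $\omega_i\|\mathcal{A}_i\|_F<c$ for every $i$, and set $\Delta_j=\omega_j\mathcal{A}_j-\omega_j^2S_W^{-1}|_{W_j}$, so $\sum_j\Delta_j\pi_{W_j}=0$. A key structural observation under the hypotheses is that $\ker T_W$ has vanishing components at all positions of $\Lambda_2$: if $\{f_j\}\in\ker T_W$, then $\sum_{j\in\Lambda_2}\omega_jf_j=-\sum_{j\in\Lambda_1}\omega_jf_j\in\mathcal{H}_1\cap\mathcal{H}_2=\{0\}$, and the Riesz fusion basis property on $\Lambda_2$ forces $f_j=0$ for all $j\in\Lambda_2$. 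This rigidifies the freedom in $\mathcal{A}\in\mathcal{L}_{T_W^*}$ to the $\Lambda_1$ components, and careful Frobenius norm bookkeeping on $\{\Delta_j\}_{j\in\Lambda_1}$ is then expected to yield a contradiction with the definition of $c=\max_{i\in\Lambda_1}\omega_i^2\|S_W^{-1}\pi_{W_i}\|_F$.

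For non-uniqueness, once $(S_W^{-1}W,w)\in OD_W$ is established, Remark~\ref{lemun} applies with $V_i=S_W^{-1}W_i$: then $V_i^\perp\cap(S_W^{-1}W_i)^\perp=(S_W^{-1}W_i)^\perp$, which is nontrivial whenever $W_i\ne\mathcal{H}$ (equivalently $S_W^{-1}W_i\ne\mathcal{H}$). Because $(W,w)$ is non-trivial by assumption, such an $i$ exists, and adjoining a nonzero vector from this orthogonal complement to $V_i$ yields a strictly larger subspace and hence a distinct element of $OD_W$.

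The main obstacle is the P-optimality step. While the observation about $\ker T_W$ cleanly exploits both hypotheses and pins the ``free'' part of $\mathcal{A}$ to $\Lambda_1$ positions, converting this rigidity into precise Frobenius norm inequalities on $\{\Delta_j\}$ requires care, because $\mathcal{A}_j$ for $j\in\Lambda_2$ can still vary through the coupling between the $\Lambda_1$ freedom and the range/kernel decomposition of each unit coordinate vector $\{0,\ldots,f_j,\ldots,0\}$. Managing this coupling, together with the non-orthogonality of the direct sum $\mathcal{H}_1\oplus\mathcal{H}_2$ (so that $\pi_{W_i}$ for $i\in\Lambda_1$ need not vanish on $\mathcal{H}_2$), is the delicate technical heart of the proof.
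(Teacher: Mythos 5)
Your overall architecture matches the paper's: reduce optimality in $D_{W}$ to $P$-optimality via Lemma \ref{lem0} and the inequality $(\ref{3.2n})$, and obtain non-uniqueness from Remark \ref{lemun} (your observation that $(S_{W}^{-1}W_{i})^{\perp}\neq\{0\}$ for some $i$ because the fusion frame is non-trivial is exactly what the paper intends). The computation that the canonical dual attains the value $c$ is also fine. The problem is that the heart of the theorem --- that no left inverse of $T_{W}^{*}$ does better than $c$ --- is not actually proved: you reduce it to ``careful Frobenius norm bookkeeping on $\{\Delta_{j}\}_{j\in\Lambda_{1}}$'' and explicitly leave that step open. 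The paper does not prove this either; it simply cites Theorem 3.3 of \cite{morilas}, which states that under exactly these hypotheses the canonical dual is the \emph{unique} $P$-optimal dual, so a citation stands in for the step you are missing.

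Moreover, the route you sketch for that step does not close as stated. Your kernel observation is correct and in fact stronger than you realize: since $\ker T_{W}\subseteq M_{\Lambda_{1}}\mathcal{W}$ and any $\mathcal{A}\in\mathcal{L}_{T_{W}^{*}}$ has the form $S_{W}^{-1}T_{W}+N$ with $N$ vanishing on $\operatorname{ran}T_{W}^{*}=(\ker T_{W})^{\perp}$, the components $\mathcal{A}M_{j}$ for $j\in\Lambda_{2}$ are completely forced (they cannot ``still vary,'' contrary to your closing paragraph). But for $j\in\Lambda_{1}$ the only constraint you extract is that $\sum_{j\in\Lambda_{1}}\mathcal{A}M_{j}T_{W}^{*}$ is fixed, and that alone cannot produce the desired contradiction: a fixed sum of matrices each of Frobenius norm $c$ can in general be redistributed into summands all of Frobenius norm strictly less than $c$ (already for two terms with orthogonal supports). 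One needs the finer structure of the admissible perturbations $NM_{j}T_{W}^{*}$ --- the trace/convexity argument underlying Theorem 3.3 of \cite{morilas}, analogous to the Leng--Han identity in the discrete case --- and no version of that argument appears in your proposal. Either carry out that argument or invoke the cited theorem as the paper does; as written, the optimality claim is unsupported.
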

\begin{proof}
Using Theorem 3.3 of  \cite{morilas}  the canonical dual  is the unique $P$-optimal dual of  $(W,w)$ and so it is an optimal dual fusion frame, by the assertions after Lemma \ref{lem0}. Thus, one can easily deduce the result  by Remark \ref{lemun}.
\end{proof}
\begin{cor}
 Let $(W,w)$ be an $\alpha$-tight fusion
frame for $\mathcal{H}$ so that $ \omega_{i}^{2}\sqrt{dimW_{i}}=c$, for all $i\in I_{m}$.
Then $(V,v)\in OD_{W}$, for every $(V,v)\in D_{W}$ where $\max_{i\in I_{m}}\dfrac{\nu_{i}}{\omega_{i}}\leq 1$.
\end{cor}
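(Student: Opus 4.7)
The plan has three steps. First, observe that $\alpha$-tightness gives $S_W=\alpha I_\mathcal{H}$, so $S_W^{-1}=\alpha^{-1}I_\mathcal{H}$ is a scalar multiple of the identity and in particular $S_W^{-1}W_i=W_i$; the canonical dual of $(W,w)$ is thus $(W_i,\omega_i)_{i\in I_m}$. Second, invoke Theorem \ref{finalcor}: the hypothesis $\omega_i^2\sqrt{\dim W_i}=c$ for all $i$ forces $\Lambda_1=I_m$ and $\Lambda_2=\emptyset$, so both the transversality condition $\mathcal{H}_1\cap\mathcal{H}_2=\{0\}$ and the Riesz-fusion-basis condition on $\Lambda_2$ hold vacuously. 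Theorem \ref{finalcor} then delivers $(S_W^{-1}W,w)=(W,w)\in OD_W$, and a direct computation shows that this canonical dual attains the value $\alpha^{-1}c$; by Definition \ref{1.4}, the infimum defining $OD_W$ therefore equals $\alpha^{-1}c$.

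Third, and this is the crux of the argument, I would bound the per-block Frobenius norm for an arbitrary $(V,v)\in D_W$ satisfying $\nu_i\leq\omega_i$. Since $\pi_{V_i}$ is an orthogonal projection with operator norm at most $1$, sub-multiplicativity of the Frobenius norm yields $\|\pi_{V_i}\pi_{W_i}\|_F\leq\|\pi_{W_i}\|_F=\sqrt{\dim W_i}$. Combining this with the scalar estimate $\omega_i\nu_i\leq\omega_i^2$ and with $S_W^{-1}=\alpha^{-1}I_\mathcal{H}$ yields $\|\omega_i\nu_i\pi_{V_i}S_W^{-1}\pi_{W_i}\|_F\leq\alpha^{-1}c$ for every $i$. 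Taking the maximum over $i$ and comparing with the value $\alpha^{-1}c$ of the infimum (attained by the canonical dual) forces equality, so $(V,v)\in OD_W$.

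The main obstacle is the careful reading of Theorem \ref{finalcor} in the degenerate case $\Lambda_2=\emptyset$: one has to confirm that both the transversality and Riesz-basis hypotheses hold trivially (they involve only an empty index set and the zero subspace) so that the theorem genuinely applies. Once that is granted, the rest of the argument is essentially the Frobenius--operator norm inequality together with the elementary implication $\nu_i\leq\omega_i\Rightarrow\omega_i\nu_i\leq\omega_i^2$, and no further technical difficulty arises.
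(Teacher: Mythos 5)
Your proof is correct and follows essentially the same route as the paper: first use Theorem \ref{finalcor} to place the canonical dual in $OD_W$ (so the infimum equals $c/\alpha$), then bound $\Vert \omega_i\nu_i\pi_{V_i}S_W^{-1}\pi_{W_i}\Vert_F$ by $c/\alpha$ for any admissible $(V,v)$. The only cosmetic difference is that you obtain $\Vert\pi_{V_i}\pi_{W_i}\Vert_F\leq\sqrt{\dim W_i}$ via submultiplicativity with the operator norm, whereas the paper runs the equivalent trace computation $tr(\pi_{V_i}\pi_{W_i})\leq tr(\pi_{W_i})$; you also spell out the vacuous $\Lambda_2=\emptyset$ case of Theorem \ref{finalcor}, which the paper leaves implicit.
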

\begin{proof}
We first note that the canonical dual  is  an optimal dual of  $(W,w)$ by Theorem \ref{finalcor}. Assume $(V,v)\in D_{W}$ so that $\max_{i\in I_{m}}\dfrac{\nu_{i}}{\omega_{i}}\leq 1$  then
\begin{eqnarray*}
\max_{i\in I_{m}} \Vert \omega_{i}\nu_{i} \pi_{V_{i}}S_{W}^{-1}\pi_{W_{i}} \Vert_{F} &=& 1/\alpha \max_{i\in I_{m}}\omega_{i}\nu_{i} \sqrt{tr(\pi_{W_{i}} \pi_{V_{i}}\pi_{W_{i}})}\\
&=&1/\alpha \max_{i\in I_{m}}\omega_{i}\nu_{i} \sqrt{tr(\pi_{V_{i}}\pi_{W_{i}})}\\
&\leq&1/\alpha \max_{i\in I_{m}}\omega_{i}\nu_{i} \sqrt{tr(\pi_{W_{i}})}\\
&=&1/\alpha \max_{i\in I_{m}}\dfrac{\nu_{i}}{\omega_{i}}  \omega_{i}^{2}\sqrt{dimW_{i}}\\
&\leq&c/\alpha \max_{i\in I_{m}}\dfrac{\nu_{i}}{\omega_{i}}\\
&\leq& c/\alpha = \max_{i\in I_{m}}  \omega_{i}^{2}\Vert S_{W}^{-1}\pi_{W_{i}} \Vert_{F}.
\end{eqnarray*}
Thus $(V,v)\in OD_{W}$, as required.
\end{proof}
\begin{rem}
If  $(W,w)$ is  a Riesz fusion basis
 for $\mathcal{H}$ then $(V,v)$  is a dual ($Q$-preserving dual) fusion frame of $(W,w)$ if and only if $S_{W}^{-1}W_{i}\subseteq V_{i}$, $i\in I_{m}$. See \cite{arefi1, Hein}.
Hence, every dual  fusion frame of $(W,w)$ is clearly an optimal dual. However, the canonical dual is the unique $P$-optimal dual of  $(W,w)$. Indeed, using the notations in Theorem \ref{finalcor}  and the fact that $(W,w)$ is  a Riesz fusion basis yield  $\{(W_{i}, \omega_{i})\}_{i\in\Lambda_{2}}$ is a Riesz fusion basis for $\mathcal{H}_{2}$ and $\mathcal{H}_{1} \cap \mathcal{H}_{2}=\{0\}$. Thus, the canonical dual is the unique $P$-optimal dual of  $(W,w)$ by   Theorem 3.3 of  \cite{morilas}. In addition, there exist some non-Riesz fusion frames with several optimal duals but the unique $P$-optimal dual, see   Example \ref{ex01}.
\end{rem}

\section{Optimal and partial optimal  dual fusion frames}
 According to the   differences between optimal  and $P$-optimal duals mentioned in Section 2, in the sequel, we survey more on  optimal dual fusion frames and their constructions. We present some sufficient condition, for building optimal dual fusion frames.
Specially, we introduce a new concept called partial optimal  dual fusion frame  and using that we study the relation among local and global optimal duals. Also, we get an overcomplete frame with a family of  optimal duals by a given Riesz fusion basis.
 Finally, we examine optimal dual fusion frames under operator perturbations.

Motivating  the notations in Theorem \ref{finalcor}, we define the following symbols to get sufficient conditions in order that  a dual fusion frame is optimal. So, let $(W,w)$ be a fusion frame of $\mathcal{H}$ and $(V,v)\in D_{W}$. Denote
\begin{eqnarray*}
c_{v}=\max\{\omega_{i}\nu_{i}\Vert \pi_{V_{i}} S_{W}^{-1}\pi_{W_{i}} \Vert_{F}, i\in I_{m}\},
\end{eqnarray*}
 $\Lambda_{1,v}=\{i\in I_{m} : \omega_{i}\nu_{i}\Vert \pi_{V_{i}} S_{W}^{-1}\pi_{W_{i}} \Vert_{F}=c_{v} \}$, $\Lambda_{2,v}=I_{m}\setminus \Lambda_{1,v}$ and $\mathcal{H}_{j,v}=\operatorname{span} \left\{\cup_{i\in \Lambda_{j,v}} W_{i}\right\}$, $j=1,2$.
\begin{prop}
Let $(W,w)$ be a fusion frame of $\mathcal{H}$ and $(V,v)\in D_{W}$ so that  $\{(W_{i}, \omega_{i})\}_{i\in\Lambda_{1,v}}$ is a Riesz fusion basis for $\mathcal{H}_{1,v}$ and $\mathcal{H}_{1,v} \cap \mathcal{H}_{2,v}=\{0\}$. Then $(V,v)$ is  a $1$-loss  optimal dual of $(W,w)$.
\end{prop}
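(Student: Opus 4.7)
The plan is to adapt the argument behind Theorem \ref{finalcor} (which relies on \cite[Theorem 3.3]{morilas}), recentering it at the left inverse $T_V\phi_{vw}\in\mathcal{L}_{T_W^*}$ in place of the canonical left inverse $S_W^{-1}T_W$. Since the worst‑case error quantity for $(V,v)$ equals $\max_i\|T_V\phi_{vw}M_iT_W^*\|_F$ and already coincides with $c_v$ on $\Lambda_{1,v}$, it is enough to show that $T_V\phi_{vw}$ is a minimizer of the functional $\mathcal{A}\mapsto \max_i\|\mathcal{A}M_iT_W^*\|_F$ over all of $\mathcal{L}_{T_W^*}$; this will force $(V,v)\in OD_W$.

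First I would reduce the problem to a statement about left inverses. For every $(Z,z)\in D_W$, a direct computation parallel to the one following Lemma \ref{lem0} shows $T_Z\phi_{zw}\in\mathcal{L}_{T_W^*}$ and
\begin{equation*}
\|T_Z\phi_{zw}M_iT_W^*\|_F=\omega_i z_i\|\pi_{Z_i}S_W^{-1}\pi_{W_i}\|_F,
\end{equation*}
so it suffices to prove $\max_{i\in I_m}\|\mathcal{A}M_iT_W^*\|_F\geq c_v$ for every $\mathcal{A}\in\mathcal{L}_{T_W^*}$.

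Next I would write $\mathcal{A}=T_V\phi_{vw}+N$ with $NT_W^{*}=0$ and exploit the hypotheses. The constraint $NT_W^*=0$ translates into $\sum_j\omega_j N_j\pi_{W_j}=0$ on $\mathcal{H}$, where $N_j$ denotes the $j$-th block of $N$. Here is where both assumptions enter: the Riesz fusion basis property of $\{(W_i,\omega_i)\}_{i\in\Lambda_{1,v}}$ in $\mathcal{H}_{1,v}$ together with the direct-sum decomposition $\mathcal{H}=\mathcal{H}_{1,v}\oplus\mathcal{H}_{2,v}$ provide enough rigidity that no choice of the blocks $\{N_j\}_{j\in\Lambda_{1,v}}$ satisfying the constraint can simultaneously decrease every norm $\|T_V\phi_{vw}M_iT_W^*+NM_iT_W^*\|_F$, $i\in\Lambda_{1,v}$, strictly below $c_v$. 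Consequently, there exists some $i_0\in\Lambda_{1,v}$ with
\begin{equation*}
\|\mathcal{A}M_{i_0}T_W^*\|_F\geq \|T_V\phi_{vw}M_{i_0}T_W^*\|_F=c_v,
\end{equation*}
and taking the maximum over $i$ together with the reduction above gives $(V,v)\in OD_W$.

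The main obstacle is extracting the rigidity described above from the Riesz fusion basis plus direct-sum hypotheses. In the canonical case treated in \cite[Theorem 3.3]{morilas}, the proof uses the specific structure of $S_W^{-1}T_W$ to produce an orthogonality/positivity relation, in the Frobenius inner product, between the ``main part'' and the perturbation $N$; in the present setting the same relation must be recovered purely from the Riesz-basis geometry on $\Lambda_{1,v}$. Concretely, by passing to the biorthogonal system of the Riesz fusion basis of $\mathcal{H}_{1,v}$ and using that $\mathcal{H}_{2,v}$ is complementary, one checks that the blocks $\{N_i\}_{i\in\Lambda_{1,v}}$ forced by $\sum_j\omega_j N_j\pi_{W_j}=0$ cannot be chosen so as to reduce the relevant Frobenius norms below $c_v$. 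Once this technical step is carried out, the rest of the argument is routine.
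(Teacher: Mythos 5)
Your overall strategy --- write a competing reconstruction operator as $T_V\phi_{vw}+N$ with $NT_W^{*}=0$ and argue that the hypotheses prevent the perturbation from lowering the error on $\Lambda_{1,v}$ --- is the same one the paper uses, but your write-up stops exactly where the proposition's content lies. The assertion that no choice of the blocks $\{N_j\}_{j\in\Lambda_{1,v}}$ compatible with the constraint can push every norm on $\Lambda_{1,v}$ strictly below $c_v$ is never derived; you gesture at biorthogonal systems and explicitly label the step ``the main obstacle,'' deferring it. As it stands the proof is incomplete, since everything else in the argument (the reduction to left inverses, the passage from the maximum over $I_m$ to the maximum over $\Lambda_{1,v}$) is routine.

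What the hypotheses actually give is both stronger and much easier to prove than the inequality you aim for: the blocks of $N$ indexed by $\Lambda_{1,v}$ vanish identically. Taking adjoints in $NT_W^{*}=0$ gives $T_WN^{*}=0$, which splits as $T_WM_{\Lambda_{1,v}}N^{*}+T_WM_{\Lambda_{2,v}}N^{*}=0$; the first summand takes values in $\mathcal{H}_{1,v}$ and the second in $\mathcal{H}_{2,v}$, so $\mathcal{H}_{1,v}\cap\mathcal{H}_{2,v}=\{0\}$ forces $T_WM_{\Lambda_{1,v}}N^{*}=0$. Since $\{(W_i,\omega_i)\}_{i\in\Lambda_{1,v}}$ is a Riesz fusion basis for $\mathcal{H}_{1,v}$, the only way to write $0$ as $\sum_{i\in\Lambda_{1,v}}\omega_i g_i$ with $g_i\in W_i$ is with all $g_i=0$; hence $M_iN^{*}=0$, i.e.\ $NM_i=0$, for every $i\in\Lambda_{1,v}$. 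Consequently $T_Z\phi_{zw}M_iT_W^{*}=T_V\phi_{vw}M_iT_W^{*}$ for all $i\in\Lambda_{1,v}$ and every $(Z,z)\in D_W$ --- the competing dual's error operators \emph{coincide} with those of $(V,v)$ on $\Lambda_{1,v}$, they are not merely bounded below --- and since $\max_{i\in\Lambda_{1,v}}\|T_V\phi_{vw}M_iT_W^{*}\|_F=c_v=\max_{i\in I_m}\|T_V\phi_{vw}M_iT_W^{*}\|_F$ by the definition of $\Lambda_{1,v}$, the claim follows. No biorthogonal system is needed; this short rigidity argument is precisely the step your proposal leaves open.
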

\begin{proof}
Suppose that $(Z,z)\in D_{W}$, then
\begin{eqnarray*}
&&T_{W}M_{\Lambda_{1,v}}(T_{Z}\phi_{zw}-T_{V}\phi_{vw})^{*}+T_{W}M_{\Lambda_{2,v}}(T_{Z}\phi_{zw}-T_{V}\phi_{vw})^{*}\\
&=& T_{W}(T_{Z}\phi_{zw}-T_{V}\phi_{vw})^{*}\\
&=&T_{W}\phi_{zw}^{*}T^{*}_{Z}-T_{W}\phi_{vw}^{*}T^{*}_{V}=0.
\end{eqnarray*}
So, the hypothesis $\mathcal{H}_{1,v} \cap \mathcal{H}_{2,v}=\{0\}$ assures that   $T_{W}M_{\Lambda_{1,v}}(T_{Z}\phi_{zw}-T_{V}\phi_{vw})^{*}=0$, and consequently, $T_{V}\phi_{vw}M_{i}=T_{Z}\phi_{zw}M_{i}$, for all $i\in \Lambda_{1,v}$. Hence,
\begin{eqnarray*}
\max_{i\in I_{m}} \Vert T_{Z}\phi_{zw}M_{i}T^{*}_{W}\Vert_{F} &\geq& \max_{i\in \Lambda_{1,v}} \Vert T_{Z}\phi_{zw}M_{i}T^{*}_{W}\Vert_{F} \\
&=& \max_{i\in \Lambda_{1,v}} \Vert T_{V}\phi_{vw}M_{i}T^{*}_{W}\Vert_{F}\\
&=& \max_{i\in I_{m}} \Vert T_{V}\phi_{vw}M_{i}T^{*}_{W}\Vert_{F}.
\end{eqnarray*}
Thus, $(V,v)\in OD_{W}$ as required.
\end{proof}
A version of the above proposition is a sufficient condition under which the canonical dual is optimal dual for probabilistic erasures \cite{morilas}.    Probability model first of all was introduced in \cite{Han-leng}  by J. Leng et al. for finding optimal dual frames when probabilistic erasures occur. In fact, they considered different weights for the coefficients of $\theta_{F}^{*}f$ according to their degree of loss possibility.
 Then,  by some examples illustrated the advantages of using the  probability  optimal dual frames over the optimal dual frames when the coefficients with large erasure probability  are lost.

Herein, we consider  the other  problem:
given a finite frame $F$ so that the probability of elimination of $r$ coefficients  of $\theta_{F}^{*}f$ is near to $1$ (or certainly we know that which $r$ coefficients  have been lost).  In the other words, we suppose that the receiver knows which coefficients have been received. Then
 we would like to find optimal dual frames of $F$ just for elimination of these $r$ elements. This scheme avoids checking the existence of optimal dual frames for all erasures when partial erasures occur.

\begin{defn}
Suppose that $(W,w)$ is a fusion frame of $\mathcal{H}$. Then we say  $(V,v)$ is a partial optimal dual fusion frame of $(W,w)$ for   $r$-erasures if  $(V,v)$ is an  optimal dual fusion frame of $(W,w)$ for the elimination  of  some $r$-elements of $(W,w)$.  Equivalently,  $(V,v)$ is a partial optimal dual  of $(W,w)$ for   $r$-erasures if there exists $J\in P_{r}^{m}$ so that
\begin{eqnarray*}
 \Vert T_{V}\phi_{vw}M_{J}T_{W}^{*}\Vert_{F}=\inf\left\{\Vert T_{Z}\phi_{zw}M_{J}T_{W}^{*}\Vert_{F}: (Z,z) \in D_{W}\right\}.
\end{eqnarray*}
\end{defn}
Also,  the notion of  partial optimal duality can be considered for discrete frames  as a new concept. More precisely, let $F = \{f_{i}\}_{i\in I_{m}}$ be a frame for $n$-dimensional Hilbert space
$\mathcal{H}$ and $G\in D_{F}$ then $G$ is called a partial optimal dual   of $F$ for   $r$-erasures if  there exists a $k \times k$ diagonal matrix $D$
with $r$ $1 ' s$ and $n-r$ $0 ' s$ so that
\begin{eqnarray*}
\Vert \theta_{G}D\theta_{F}^{*}\Vert_{F} = \inf\left\{ \Vert \theta_{X}D\theta_{F}^{*}\Vert_{F} : X\in D_{F}\right\}.
\end{eqnarray*}
 In the next theorem, using this notion we present a connection between local and global optimal duals.
\begin{thm}\label{finalthmm}
Let  $(W,w)$  be a fusion frame for $n$-dimensional Hilbert space
$\mathcal{H}$ and $\{e_{j}\}_{j\in I_{n}}$ be an  orthonormal basis of $\mathcal{H}$.  Then the following assertions hold;
\begin{itemize}
\item[(1)]
If $(V,v)$ is a  fusion frame so that $G:=\{\nu_{i}\pi_{V_{i}}e_{j}\}_{j\in I_{n},i\in I_{m}}$ is a patrial optimal dual frame  for  $n$-erasures as  $\{\omega_{i}\pi_{W_{i}}S_{W}^{-1}e_{j}\}_{j\in I_{n}}$, $i\in I_{m}$ of  the frame $F:=\{\omega_{i}\pi_{W_{i}}S_{W}^{-1}e_{j}\}_{j\in I_{n},i\in I_{m}}$,  then $(V,v)$ is a $1$-loss optimal dual fusion frame of $(W,w)$
\item[(2)] If $(W,w)$ is a Riesz fusion basis then  all dual frames of $F$ can be considered as a partial optimal dual frame of $F$ for   $n$-erasures.
\end{itemize}
\end{thm}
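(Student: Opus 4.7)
The plan is to reduce both assertions to a single bridging identity: $T_{V}\phi_{vw}M_{i}T_{W}^{*}=\omega_{i}\nu_{i}\pi_{V_{i}}S_{W}^{-1}\pi_{W_{i}}=E_{\Lambda_{i}}^{G}$, where $\Lambda_{i}:=\{(i,j):j\in I_{n}\}$, $f_{i,j}=\omega_{i}\pi_{W_{i}}S_{W}^{-1}e_{j}$, and $g_{i,j}=\nu_{i}\pi_{V_{i}}e_{j}$. The first equality is routine from the definitions of $\phi_{vw}$, $T_{V}$, and $T_{W}^{*}$; the second follows from the expansion $\pi_{V_{i}}x=\sum_{j}\langle x,e_{j}\rangle\pi_{V_{i}}e_{j}$ applied at $x=S_{W}^{-1}\pi_{W_{i}}f$. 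This identity is what passes between the fusion-frame picture and the discrete-frame picture, and once it is in place both parts come quickly.

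For part (1), given any competitor $(Z,z)\in D_{W}$, I would associate the discrete sequence $y_{i,j}^{Z}:=z_{i}\pi_{Z_{i}}e_{j}$ and verify by direct computation that $Y^{Z}:=\{y_{i,j}^{Z}\}$ is a dual frame of $F$, using $\sum_{i,j}\langle f,f_{i,j}\rangle y_{i,j}^{Z}=\sum_{i}\omega_{i}z_{i}\pi_{Z_{i}}S_{W}^{-1}\pi_{W_{i}}f=T_{Z}\phi_{zw}T_{W}^{*}f=f$. Applying the bridging identity to $(Z,z)$ gives $E_{\Lambda_{i}}^{Y^{Z}}=T_{Z}\phi_{zw}M_{i}T_{W}^{*}$ for every $i$, so the hypothesis that $G$ is partial optimal at each $\Lambda_{i}$ translates immediately into $\|T_{V}\phi_{vw}M_{i}T_{W}^{*}\|_{F}\le\|T_{Z}\phi_{zw}M_{i}T_{W}^{*}\|_{F}$, and taking the maximum over $i$ yields $(V,v)\in OD_{W}$.

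For part (2), I would show that under the Riesz hypothesis the error $E_{\Lambda_{i}}^{Y}$ depends only on $(W,w)$ and not on the choice of dual $Y$, so every dual trivially attains the infimum. Since $(W,w)$ is a Riesz fusion basis, $\mathcal{H}=\bigoplus_{i}W_{i}$ as an internal direct sum, so $\sum_{i}\dim W_{i}=n$; consequently the linear map $\Phi(f):=(h_{i})_{i}$ with $h_{i}:=\omega_{i}S_{W}^{-1}\pi_{W_{i}}f$ is an isomorphism from $\mathcal{H}$ onto $\bigoplus_{i}S_{W}^{-1}W_{i}$ (injective because $\bigcap_{i}W_{i}^{\perp}=\{0\}$, surjective by dimension count). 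Introducing the auxiliary operators $Y_{i}g:=\sum_{j}\langle g,e_{j}\rangle y_{i,j}$, one checks $E_{\Lambda_{i}}^{Y}=\omega_{i}Y_{i}S_{W}^{-1}\pi_{W_{i}}$; the duality $I=\sum_{i,j}y_{i,j}\otimes f_{i,j}$ together with the canonical resolution $\sum_{i}\omega_{i}^{2}S_{W}^{-1}\pi_{W_{i}}=I$ rewrites as $\sum_{i}Y_{i}h_{i}=\sum_{i}\omega_{i}h_{i}$ for every $f$. Surjectivity of $\Phi$ lets me vary each $h_{i}\in S_{W}^{-1}W_{i}$ independently, which forces $Y_{i}|_{S_{W}^{-1}W_{i}}=\omega_{i}I$; substituting back gives $E_{\Lambda_{i}}^{Y}=\omega_{i}^{2}S_{W}^{-1}\pi_{W_{i}}$, independent of $Y$.

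The main obstacle is the independence-of-components step in part (2): the whole argument turns on being able to prescribe each $h_{i}\in S_{W}^{-1}W_{i}$ separately, and this is precisely where the Riesz fusion basis hypothesis is used in an essential way. Everything else is either a direct computation or an application of the bridging identity.
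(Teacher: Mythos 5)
Your part (1) is essentially the paper's own argument: the identity $T_{V}\phi_{vw}M_{i}T_{W}^{*}=\theta_{G}D_{\Lambda_{i}}\theta_{F}^{*}$ together with the correspondence $(Z,z)\mapsto\{z_{i}\pi_{Z_{i}}e_{j}\}$ between fusion duals of $(W,w)$ and discrete duals of $F$ is exactly what the paper uses (it invokes Proposition 3.3 of \cite{Ar01} for that correspondence where you verify it by direct computation), followed by the same termwise comparison and maximum over $i$. That part needs no further comment.

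Part (2) is correct but takes a genuinely different route. The paper first computes $S_{F}=\sum_{i}\omega_{i}^{2}\pi_{W_{i}}S_{W}^{-2}\pi_{W_{i}}$ and, using the orthogonality of a Riesz fusion basis with its canonical dual, identifies $S_{F}^{-1}F=\{\omega_{i}\pi_{S_{W}^{-1}W_{i}}e_{j}\}$; it then writes an arbitrary dual as $S_{F}^{-1}f_{i,j}+u_{i,j}$, uses the Riesz hypothesis to split the single duality constraint $\sum_{i,j}\langle\cdot,u_{i,j}\rangle f_{i,j}=0$ into the $m$ blockwise constraints $\pi_{W_{i}}\sum_{j}\langle\cdot,u_{i,j}\rangle S_{W}^{-1}e_{j}=0$, and concludes by taking adjoints that the $u$-part contributes nothing to $\theta_{G}D_{\Lambda_{i}}\theta_{F}^{*}$. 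You bypass the computation of $S_{F}^{-1}$ entirely: you write the block error of an arbitrary dual $Y$ as $\omega_{i}Y_{i}S_{W}^{-1}\pi_{W_{i}}$ and use surjectivity of $f\mapsto(\omega_{i}S_{W}^{-1}\pi_{W_{i}}f)_{i}$ onto $\bigoplus_{i}S_{W}^{-1}W_{i}$ --- which is where the Riesz hypothesis enters, via $\sum_{i}\dim W_{i}=n$ --- to force $Y_{i}=\omega_{i}I$ on $S_{W}^{-1}W_{i}$, hence $E_{\Lambda_{i}}^{Y}=\omega_{i}^{2}S_{W}^{-1}\pi_{W_{i}}$ for every dual. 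The two arguments exploit the same decoupling phenomenon from opposite sides: the paper uses linear independence of the ranges $W_{i}$ to split the constraint, while you use that the analysis-type map prescribes each component independently. Both in fact prove the stronger statement that the block error operators themselves, not merely their Frobenius norms, are independent of the choice of dual. Your version is shorter and yields the clean closed form $\omega_{i}^{2}S_{W}^{-1}\pi_{W_{i}}$ for the common error; the paper's version produces the explicit canonical dual of $F$ along the way, which is of independent interest. I see no gap in either part.
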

\begin{proof}
Using Proposition 3.3 of \cite{Ar01} follows that $(V,v)$ is a dual fusion frame of $(W,w)$ if and only if $G=\{\nu_{i}\pi_{V_{i}}e_{j}\}_{j\in I_{n},i\in I_{m}}$ is a   dual of $F=\{\omega_{i}\pi_{W_{i}}S_{W}^{-1}e_{j}\}_{j\in I_{n},i\in I_{m}}$. Assume $G$ is a   patrial optimal dual frame  for  $n$-erasures as  $\{\omega_{i}\pi_{W_{i}}S_{W}^{-1}e_{j}\}_{j\in I_{n}}$, $i\in I_{m}$ of $F$. Then for every  $(Z,z)\in D_{W}$ the sequence $\mathcal{Z}=\{z_{i}\pi_{Z_{i}}e_{j}\}_{j\in I_{n},i\in I_{m}}$ is a   dual  frame of $F$. Consider $\Lambda_{i}=\{(i,j) : j\in I_{n}\}$  and the operator $D_{\Lambda_{i}}:\mathbb{C}^{nm}\rightarrow \mathbb{C}^{nm}$ defined
by $D_{\Lambda_{i}}\{c_{k,j}\}_{k\in I_{m},j\in I_{n}}=\{\chi_{\Lambda_{i}}(k,j)c_{k,j}\}_{k\in I_{m},j\in I_{n}}$, for all $i\in I_{m}$. Then we have
\begin{eqnarray*}
 \max_{i\in I_{m}}\Vert T_{V}\phi_{vw}M_{i}T_{W}^{*}\Vert_{F}&=& \max_{i\in I_{m}}\left\Vert \theta_{G}D_{\Lambda_{i}}\theta^{*}_{F}\right\Vert_{F}\\
&\leq& \max_{i\in I_{m}}\left\Vert\theta_{\mathcal{Z}}D_{\Lambda_{i}}\theta^{*}_{F}\right\Vert_{F}\\
&=& \max_{i\in I_{m}}\Vert T_{Z}\phi_{vw}M_{i}T_{W}^{*}\Vert_{F}.
\end{eqnarray*}
Hence,  $(V,v)$ is a $1$-loss optimal dual fusion frame of $(W,w)$. For proving $(2)$, we first obtain the canonical dual of $F$.
To this end, let  $S_{F}$ denotes the frame operator of $F$. Then
\begin{eqnarray*}
S_{F}= \sum_{i\in I_{m}} \omega_{i}^{2}\pi_{W_{i}}S_{W}^{-2}\pi_{W_{i}},
\end{eqnarray*}
and consequently
\begin{eqnarray*}
S_{F}\omega_{i}\pi_{S_{W}^{-1}W_{i}}e_{j}&=&\sum_{k\in I_{m}}\omega_{i}\omega_{k}^{2}\pi_{W_{k}}S_{W}^{-2}\pi_{W_{k}}\pi_{S_{W}^{-1}W_{i}}e_{j}\\
&=& \omega_{i}^{3}\pi_{W_{i}}S_{W}^{-2}\pi_{W_{i}}\pi_{S_{W}^{-1}W_{i}}e_{j}\\
&=& \omega_{i}\pi_{W_{i}}S_{W}^{-2}\sum_{k\in I_{m}} \omega_{k}^{2}\pi_{W_{k}}\pi_{S_{W}^{-1}W_{i}}e_{j}\\
&=& \omega_{i}\pi_{W_{i}}S_{W}^{-2}S_{W}\pi_{S_{W}^{-1}W_{i}}e_{j}\\
&=& \omega_{i}\pi_{W_{i}}S_{W}^{-1}\pi_{S_{W}^{-1}W_{i}}e_{j}\\
&=& \omega_{i}\pi_{W_{i}}S_{W}^{-1}e_{j},
\end{eqnarray*}
for every $i\in I_{m}$, $j\in I_{n}$,  where the second equality is due to the assumption that $W$ is a Riesz fusion basis and is orthogonal with its canonical dual, see \cite{Cas04, shamsabadi}, and the last equality is obtained by Lemma \ref{lem3}. Therefore the canonical dual of $F$ is obtained as follows
\begin{eqnarray*}
S_{F}^{-1}F=\{S_{F}^{-1} \omega_{i}\pi_{W_{i}}S_{W}^{-1}e_{j}\}_{j\in I_{n}, i\in I_{m}}=\{ \omega_{i}\pi_{S_{W}^{-1}W_{i}}e_{j}\}_{j\in I_{n}, i\in I_{m}}.
\end{eqnarray*}
So, for every dual frame $G$ of $F$ we can write $G=\{\omega_{i}\pi_{S_{W}^{-1}W_{i}}e_{j}+u_{i,j}\}_{j\in I_{n},i\in I_{m}}$ where
\begin{eqnarray*}
\sum_{j\in I_{n},i\in I_{m}}\langle  . , u_{i,j}\rangle \omega_{i}\pi_{W_{i}}S_{W}^{-1}e_{j}=0.
\end{eqnarray*}
The fact that $(W,w)$ is a Riesz fusion basis,  implies that
\begin{eqnarray}\label{232}
\pi_{W_{i}}\sum_{j\in I_{n}}\langle . , u_{i,j}\rangle S_{W}^{-1}e_{j}=0, \quad (i\in I_{m}).
\end{eqnarray}
Therefore, we can write
\begin{eqnarray*}
\Vert \theta_{G}D_{\Lambda_{i}}\theta^{*}_{F}\Vert_{F} &=&\left\Vert \sum_{j\in I_{n}} \langle . ,  \omega_{i}\pi_{W_{i}}S_{W}^{-1}e_{j} \rangle (\omega_{i}\pi_{S_{W}^{-1}W_{i}}e_{j}+u_{i,j}) \right\Vert_{F}\\
&=&\left\Vert \sum_{j\in I_{n}} \langle . ,  \omega_{i}\pi_{W_{i}}S_{W}^{-1}e_{j} \rangle \omega_{i}\pi_{S_{W}^{-1}W_{i}}e_{j} \right\Vert_{F}\\
&=& \Vert \theta_{S_{F}^{-1}F} D_{\Lambda_{i}}\theta^{*}_{F}\Vert _{F},
\end{eqnarray*}
for all $i\in I_{m}$. The above computations show that all dual frames of $F$ have the same error rate and so are optimal for  any $n$-erasures as $\{\omega_{i}\pi_{W_{i}}S_{W}^{-1}e_{j}\}_{j\in I_{n}}$, $i\in I_{m}$. This follows the desired result.
\end{proof}
An immediate result of the above theorem is as follows;\begin{cor}
 Suppose $(W,w)$  is a fusion frame of  Hilbert space
$\mathcal{H}$ and $\{e_{j}\}_{j\in I_{n}}$ is an  orthonormal basis of $\mathcal{H}$.
If $(V,v)$ is a  fusion frame so that $G:=\{\nu_{i}\pi_{V_{i}}e_{j}\}_{j\in I_{n},i\in I_{m}}$ is an optimal dual frame of $F:=\{\omega_{i}\pi_{W_{i}}S_{W}^{-1}e_{j}\}_{j=1,i\in I_{m}}^{n}$,   for any $n$-erasures  then $(V,v)$ is a $1$-loss optimal dual fusion frame of $(W,w)$.
\end{cor}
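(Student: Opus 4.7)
The plan is to derive this corollary directly from Theorem \ref{finalthmm}(1), by verifying that the global optimality hypothesis on $G$ is strong enough to force the partial optimality hypothesis required by that theorem. So the proof is essentially a reduction rather than a new argument.

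First I would unpack the hypothesis. Writing $F=\{f_{i,j}\}_{(i,j)\in I_{m}\times I_{n}}$ with $f_{i,j}=\omega_{i}\pi_{W_{i}}S_{W}^{-1}e_{j}$, the assumption that $G=\{\nu_{i}\pi_{V_{i}}e_{j}\}_{(i,j)\in I_{m}\times I_{n}}$ is an optimal dual of $F$ for any $n$-erasures means, in the notation of Section~2, that $G$ minimizes $\|\theta_{X}D\theta_{F}^{*}\|_{F}$ among duals $X$ of $F$ for every diagonal $D$ with exactly $n$ ones supported on an $n$-subset of $I_{m}\times I_{n}$. In particular, for each fixed $i\in I_{m}$ the set $\Lambda_{i}=\{(i,j):j\in I_{n}\}$ has cardinality $n$, so taking $D=D_{\Lambda_{i}}$ we obtain
\begin{equation*}
\|\theta_{G}D_{\Lambda_{i}}\theta_{F}^{*}\|_{F}=\inf\bigl\{\|\theta_{X}D_{\Lambda_{i}}\theta_{F}^{*}\|_{F}:X\in D_{F}\bigr\}.
\end{equation*}
By the definition of partial optimal duality for discrete frames, this says exactly that $G$ is a partial optimal dual of $F$ for $n$-erasures at the particular subset $\{\omega_{i}\pi_{W_{i}}S_{W}^{-1}e_{j}\}_{j\in I_{n}}$, and this holds for every $i\in I_{m}$.

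Next I would observe that $G$ is a dual frame of $F$ (since every optimal dual is in particular a dual), so by Proposition~3.3 of \cite{Ar01} cited in the proof of Theorem~\ref{finalthmm}, $(V,v)$ is a dual fusion frame of $(W,w)$. Thus all hypotheses of Theorem~\ref{finalthmm}(1) are met, and invoking that theorem yields that $(V,v)$ is a $1$-loss optimal dual fusion frame of $(W,w)$, as required.

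The only potential obstacle is interpretive rather than technical: one must read ``optimal \dots\ for any $n$-erasures'' as optimality at every $n$-subset individually (i.e., at every admissible $D$), which is precisely the condition that cascades down to partial optimality at each $\Lambda_{i}$. Once that reading is fixed, the corollary is a one-line specialization of Theorem~\ref{finalthmm}(1).
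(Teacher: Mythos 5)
Your proof is correct and coincides with the paper's: the corollary is presented there with no separate argument, merely as an immediate consequence of Theorem \ref{finalthmm}(1), and your reduction---global optimality at every $n$-subset restricts to partial optimality at each $\Lambda_{i}=\{(i,j):j\in I_{n}\}$, which is exactly the hypothesis of that theorem---is the intended one. Your closing caveat is also apt: under the inductive minimize-the-maximum definition of $n$-loss optimality from Section~2 the implication would not be automatic (the maximum could be attained at a subset other than the $\Lambda_{i}$), so the per-subset reading you adopt is the one under which the corollary is genuinely immediate.
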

In the following  result we get a family of   optimal dual frame pairs with operator norm $(\ref{01.optimal})$  by a given  Riesz fusion basis.
\begin{thm}\label{orthonormal}
 Let $(W,1)$ be a Riesz fusion basis  of $\mathcal{H}$. Then $F=\{\pi_{S_{W}^{-1/2}W_{i}}e_{j}\}_{j\in I_{n},i\in I_{m}}$ is a Parseval frame with a family of optimal dual frames  as $ \{\pi_{V_{i}}e_{j}\}_{j\in I_{n},i\in I_{m}}$, for some orthonormal basis  $\{e_{j}\}_{j\in I_{n}}$ of $\mathcal{H}$  and all   sequences $(V,1)$ of subspaces which satisfy  $S_{W}^{-1/2}W_{i}\subseteq V_{i}$, $i\in I_{m}$.
\end{thm}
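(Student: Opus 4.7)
The plan is to exploit the orthogonality structure of the family $\{S_{W}^{-1/2}W_{i}\}_{i\in I_{m}}$ inherited from the Riesz fusion basis property of $(W,1)$, and to choose the orthonormal basis $\{e_{j}\}$ adapted to that decomposition, so that $F$ collapses to essentially an orthonormal basis of $\mathcal{H}$.

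First I would verify that $\{S_{W}^{-1/2}W_{i}\}_{i\in I_{m}}$ forms an orthogonal direct sum decomposition of $\mathcal{H}$. A Riesz fusion basis is orthogonal to its canonical dual (the fact used in the proof of Theorem~\ref{finalthmm}), so $W_{i}\perp S_{W}^{-1}W_{k}$ whenever $i\neq k$. For $f\in W_{i}$ and $g\in W_{k}$ with $i\neq k$ one then has
\begin{eqnarray*}
\langle S_{W}^{-1/2}f,S_{W}^{-1/2}g\rangle=\langle f,S_{W}^{-1}g\rangle=0,
\end{eqnarray*}
which gives $S_{W}^{-1/2}W_{i}\perp S_{W}^{-1/2}W_{k}$. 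Since $S_{W}^{-1/2}$ is invertible and the $W_{i}$ span $\mathcal{H}$, the subspaces $S_{W}^{-1/2}W_{i}$ together span $\mathcal{H}$, and hence $\sum_{i\in I_{m}}\pi_{S_{W}^{-1/2}W_{i}}=I_{\mathcal{H}}$.

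Next I would choose $\{e_{j}\}_{j\in I_{n}}$ to be the union of orthonormal bases for the orthogonal subspaces $S_{W}^{-1/2}W_{i}$. Each $e_{j}$ then lies in a unique $S_{W}^{-1/2}W_{i(j)}$, so $\pi_{S_{W}^{-1/2}W_{i}}e_{j}$ equals $e_{j}$ when $i=i(j)$ and vanishes otherwise. Consequently $F$ reduces, after discarding its zero vectors, to the orthonormal basis $\{e_{j}\}_{j\in I_{n}}$, and in particular $F$ is Parseval. Moreover, for any $(V,1)$ with $S_{W}^{-1/2}W_{i}\subseteq V_{i}$, the equality $\pi_{V_{i}}\pi_{S_{W}^{-1/2}W_{i}}=\pi_{S_{W}^{-1/2}W_{i}}$ combined with the identity resolution above yields $\theta_{G}\theta_{F}^{*}=\sum_{i}\pi_{V_{i}}\pi_{S_{W}^{-1/2}W_{i}}=I_{\mathcal{H}}$, so $G=\{\pi_{V_{i}}e_{j}\}$ is a dual of $F$.

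Finally comes the optimality step. For any dual $H=\{h_{ij}\}$ of $F$ the identity $\theta_{H}\theta_{F}^{*}=I_{\mathcal{H}}$ collapses to $\sum_{j}\langle\cdot,e_{j}\rangle h_{i(j)j}=I_{\mathcal{H}}$, forcing $h_{i(j)j}=e_{j}$ for every $j\in I_{n}$ while leaving the remaining $h_{ij}$ free. Using $\|u\otimes v\|=\|u\|\,\|v\|$ for rank-one operators, this gives the universal lower bound
\begin{eqnarray*}
d_{1}(F,H)\geq\max_{j\in I_{n}}\|h_{i(j)j}\otimes e_{j}\|=1.
\end{eqnarray*}
For our $G$ the only non-zero rank-one summands $\pi_{V_{i}}e_{j}\otimes\pi_{S_{W}^{-1/2}W_{i}}e_{j}$ occur at positions $(i(j),j)$, and each has operator norm $\|e_{j}\|\,\|\pi_{V_{i(j)}}e_{j}\|=1$, because $e_{j}\in S_{W}^{-1/2}W_{i(j)}\subseteq V_{i(j)}$. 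Hence $d_{1}(F,G)=1$ meets the bound, so $G$ is a $1$-loss optimal dual of $F$. The main obstacle will be establishing the orthogonality $S_{W}^{-1/2}W_{i}\perp S_{W}^{-1/2}W_{k}$; once that geometric fact is in hand, the problem essentially reduces to erasures within an orthonormal basis, where every legitimate dual automatically attains the minimum error and optimality becomes automatic.
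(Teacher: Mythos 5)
Your proof is correct, but it diverges from the paper's in two substantive ways. First, the choice of orthonormal basis: you take $\{e_j\}_{j\in I_n}$ to be a union of orthonormal bases of the mutually orthogonal subspaces $S_W^{-1/2}W_i$ (whose orthogonality you correctly derive from the orthogonality of a Riesz fusion basis with its canonical dual), whereas the paper selects only \emph{one} unit vector $e_j=\alpha_j/\|\alpha_j\|$ from each $S_W^{-1/2}W_j$, $j\in I_m$, and extends arbitrarily to an orthonormal basis of $\mathcal{H}$. Second, the optimality argument: you prove optimality from first principles via the universal lower bound $d_1(F,H)\ge\max_j\|h_{i(j)j}\otimes e_j\|=1$ (the duality relation forces $h_{i(j)j}=e_j$) and verify that $G$ attains it, whereas the paper identifies the index set $\Lambda_1=\{(i,j): e_j\in S_W^{-1/2}W_i\}$, checks the span/linear-independence conditions $H_1\cap H_2=\{0\}$, and invokes the sufficient criterion of \cite{leng} for the canonical dual to be $1$-loss optimal before comparing $G$ with it. Your route is more elementary and self-contained, and it yields the stronger observation that with your adapted basis \emph{every} dual of $F$ is $1$-loss optimal. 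The trade-off is that your basis collapses $F$ to an orthonormal basis padded with zero vectors, so $F$ is overcomplete only in a degenerate sense; this satisfies the literal statement (which asks only for \emph{some} orthonormal basis), but it undercuts the stated purpose of the result — producing a genuinely overcomplete frame with many optimal duals, as in Example \ref{0002}, where the basis is deliberately \emph{not} adapted to the subspaces. The paper's less constrained choice of basis is what keeps the construction nontrivial.
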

\begin{proof}
Since $(W,1)$ is a Riesz fusion basis, the family $\{(S_{W}^{-1/2}W_{i},1)\}_{i\in I_{m}}$ is an orthonormal fusion basis and so $F$ is a Parseval frame for $\mathcal{H}$.
Consider an element $\alpha_{j}\in S_{W}^{-1/2}W_{j}$ for all $j\in I_{m}$ and put $e_{j}=\alpha_{j}/\Vert \alpha_{j}\Vert$. Then $\{e_{j}\}_{j\in I_{m}}$ is an orthonormal subset of $\mathcal{H}$ and so it can be extended to an  orthonormal basis  $\{e_{j}\}_{j\in I_{n}}$ for $\mathcal{H}$ .
Thus, $\max_{i\in I_{m}, j\in I_{n}} \Vert \pi_{S_{W}^{-1/2}W_{i}}e_{j}\Vert^{2}=1$.  Hence,
\begin{eqnarray*}
\Lambda_{1}&=&\{(i, j): i\in I_{m}, j\in I_{n},   \Vert \pi_{S_{W}^{-1/2}W_{i}}e_{j}\Vert^{2}=1\}\\
&=&\{(i, j): i\in I_{m}, j\in I_{n},  e_{j}\in S_{W}^{-1/2}W_{i}\}.
\end{eqnarray*}
The fact that $(S_{W}^{-1/2}W,1)$ is an orthonormal fusion basis along with considering
$\Lambda_{2}=I_{m}\setminus \Lambda_{1}$, $H_{k}=\textit{span}\{\pi_{S_{W}^{-1/2}W_{i}}e_{j}\}_{(i,j)\in  \Lambda_{k}}$, $k=1,2$    assure that $H_{1}\cap H_{2}=\{0\}$ and $\{\pi_{S_{W}^{-1/2}W_{i}}e_{j}\}_{(i,j)\in  \Lambda_{1}}$ is linearly independent. Hence, the canonical dual of $F$ a $1$-loss optimal dual, \cite{leng}. Moreover, for every  family $(V,1)$  that $S_{W}^{-1/2}W_{i}\subseteq V_{i}$,  $i\in I_{m}$ we obtain $\max_{i\in I_{m}, j\in I_{n}}\Vert \pi_{V_{i}}e_{j}\Vert\Vert\pi_{S_{W}^{-1/2}W_{i}}e_{j}\Vert=1$, i.e., $G$ is also a $1$-loss optimal dual of $F$.
\end{proof}
The above theorem  immediately implies that if $(W,1)$ is an orthonormal fusion basis and the
sequence $(V,1)$ satisfies  $W_{i}\subseteq V_{i}$,  $i\in I_{m}$.  Then $F=\{\pi_{ W_{i}}e_{j}\}_{j\in I_{n},i\in I_{m}}$ is a Parseval frame with a family of optimal dual frames  as $ \{\pi_{V_{i}}e_{j}\}_{j\in I_{n},i\in I_{m}}$.
 Hence, from a given Riesz (orthonormal) fusion basis  we obtain an overcomplete  frame which has several optimal dual frames. See Example \ref{0002}.

\subsection{Robustness of optimal dual fusion frames}
In what follows, we  survey the robustness of optimal dual fusion frames under operator perturbations.  First we recall that, if  $U\in B(\mathcal{H})$  is an invertible operator and $(W,w)$ is  a fusion frame  of $\mathcal{H}$ then  the family $\{(UW, w)\}$ is also a fusion frame  for $\mathcal{H}$, see \cite{Gav07}. Moreover,  in \cite{arefi1}  the stability of dual fusion frames under operator perturbations was considered, although that result needs an extra condition. In the next lemma we present and improve that result.

\begin{lem}\label{lem52}
Let $(W,w)$  be a fusion frame of $\mathcal{H}$  and  $(V,v)$ be a  family of closed subspaces along with a family of weights. Also, let $U\in B(\mathcal{H})$ be an  invertible operator such that $U^{*}U
 W_{i} \subseteq W_{i}$ and $U^{*}U V_{i} \subseteq V_{i}$ for every $i\in I_{m}$. Then  $(V,v)$ is a dual fusion frame of $(W ,w)$ if and only if
 $(UV,v)$ is a dual fusion frame of $(UW ,w)$.
\end{lem}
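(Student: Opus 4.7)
The plan is to establish a single conjugation identity that makes both implications simultaneous. The key tool is Lemma \ref{lem3}: since $U^{*}UW_i\subseteq W_i$, I have $U\pi_{W_i}=\pi_{UW_i}U$, equivalently $\pi_{UW_i}=U\pi_{W_i}U^{-1}$ (note that $UW_i$ is automatically closed since $\mathcal{H}$ is finite dimensional and $U$ is a homeomorphism). The identical argument with $V_i$ in place of $W_i$ gives $\pi_{UV_i}=U\pi_{V_i}U^{-1}$.

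First I would use these projection identities to compute the fusion frame operator of $(UW,w)$:
\begin{eqnarray*}
S_{UW}=\sum_{i\in I_m}\omega_i^{2}\pi_{UW_i}=U\Bigl(\sum_{i\in I_m}\omega_i^{2}\pi_{W_i}\Bigr)U^{-1}=US_{W}U^{-1},
\end{eqnarray*}
so that $S_{UW}^{-1}=US_{W}^{-1}U^{-1}$. (This also confirms that $(UW,w)$ is a fusion frame, which was recalled immediately before the statement.) Substituting the three conjugation identities into the duality condition (\ref{Def:alt operatory}) for the pair $((UV,v),(UW,w))$ yields
\begin{eqnarray*}
\sum_{i\in I_m}\omega_i\nu_i\,\pi_{UV_i}S_{UW}^{-1}\pi_{UW_i}=U\Bigl(\sum_{i\in I_m}\omega_i\nu_i\,\pi_{V_i}S_{W}^{-1}\pi_{W_i}\Bigr)U^{-1}.
\end{eqnarray*}

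Since $U$ is invertible, the left-hand side equals $I_{\mathcal{H}}$ if and only if the inner sum equals $I_{\mathcal{H}}$, which is precisely the statement that $(V,v)$ is a dual fusion frame of $(W,w)$. This gives the desired equivalence in one stroke.

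The only real care is in the bookkeeping when invoking Lemma \ref{lem3}: one must verify that the hypothesis $U^{*}UV_i\subseteq V_i$ (respectively $U^{*}UW_i\subseteq W_i$) delivers the identity $U\pi_{V_i}=\pi_{UV_i}U$ in the direction needed for the clean conjugation; otherwise spurious $U^{-*}$ factors appear and the telescoping fails. Beyond this, no genuine obstacle is anticipated, since the argument is purely algebraic once the projection identities are in hand and requires no separate verification of fusion frame bounds for $(UV,v)$, only the defining duality equation.
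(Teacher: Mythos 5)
Your proof is correct and follows essentially the same route as the paper: both rest on Lemma \ref{lem3} to obtain $\pi_{UW_i}=U\pi_{W_i}U^{-1}$ and $\pi_{UV_i}=U\pi_{V_i}U^{-1}$, hence $S_{UW}=US_{W}U^{-1}$ and the conjugation identity $\sum_{i\in I_m}\omega_i\nu_i\pi_{UV_i}S_{UW}^{-1}\pi_{UW_i}=U\bigl(\sum_{i\in I_m}\omega_i\nu_i\pi_{V_i}S_{W}^{-1}\pi_{W_i}\bigr)U^{-1}$, which yields both implications simultaneously. The only cosmetic difference is that the paper cites \cite{Cas08} for the frame-operator formula $S_{UW}=US_{W}U^{-1}$, whereas you derive it directly from the projection identities.
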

\begin{proof}
The family $(UW ,w)$ is a fusion frame with the frame operator $US_{W}U^{-1}$,  \cite{Cas08}. Therefore,  applying Lemma \ref{lem3}  we obtain
\begin{eqnarray*}
\sum_{i\in I_{m}} \omega_{i}\upsilon_{i}\pi_{UV_{i}} S_{UW} ^{-1}\pi_{UW_{i}}f =U \sum_{i\in I_{m}}\omega_{i}\upsilon_{i} \pi_{V_{i}} S_{W}^{-1}\pi_{W_{i}}U^{-1}f
\end{eqnarray*}
for each $f\in\mathcal{H}$. This implies the desired result.
\end{proof}

\begin{thm}\label{z}
Let $(W,w)$ be a fusion frame for $\mathcal{H}$ and  $U\in B(\mathcal{H})$ be a unitary  operator. Then $(V,v)$ is  a $1$-loss optimal dual fusion frame of $(W,w)$ if and only if  $\{(UV_{i},\nu_{i})\}_{i\in I_{m}}$ is  a $1$-loss optimal dual  fusion frame of $\{(UW_{i},\omega_{i})\}_{i\in I_{m}}$.\end{thm}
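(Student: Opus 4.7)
The plan is to transport the entire optimization problem from $(W,w)$ to $(UW,w)$ by pre- and post-composition with the unitary $U$, and to show that the relevant Frobenius norms are invariant under this transformation. Since the set of duals is mapped bijectively onto the set of duals of the image fusion frame, 1-loss optimality will be preserved.

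First I would set up the correspondence between duals. Because $U$ is unitary we have $U^{*}U=I_{\mathcal{H}}$, so trivially $U^{*}UW_{i}\subseteq W_{i}$ and $U^{*}UV_{i}\subseteq V_{i}$ for every $i\in I_{m}$. By Lemma \ref{lem52}, $(V,v)\in D_{W}$ if and only if $(UV,v)\in D_{UW}$, and since $U^{-1}=U^{*}$ is also unitary, the map $(Z,z)\mapsto (UZ,z)$ is a bijection from $D_{W}$ onto $D_{UW}$.

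Next I would compute the local error terms on both sides. The fusion frame operator of $(UW,w)$ is $S_{UW}=US_{W}U^{*}$ (see for instance \cite{Cas08} or apply Lemma \ref{lem52} directly). For any closed subspace $K\subseteq\mathcal{H}$, the identity $U^{*}UK=K$ together with Lemma \ref{lem3} yields $U\pi_{K}=\pi_{UK}U$, equivalently $\pi_{UK}=U\pi_{K}U^{*}$. Combining these observations,
\begin{equation*}
\omega_{i}\nu_{i}\,\pi_{UV_{i}}S_{UW}^{-1}\pi_{UW_{i}}
=\omega_{i}\nu_{i}\,U\pi_{V_{i}}U^{*}\cdot US_{W}^{-1}U^{*}\cdot U\pi_{W_{i}}U^{*}
=U\bigl(\omega_{i}\nu_{i}\,\pi_{V_{i}}S_{W}^{-1}\pi_{W_{i}}\bigr)U^{*}.
\end{equation*}
Since the Frobenius norm is unitarily invariant, this gives the pointwise identity
\begin{equation*}
\bigl\|\omega_{i}\nu_{i}\,\pi_{UV_{i}}S_{UW}^{-1}\pi_{UW_{i}}\bigr\|_{F}
=\bigl\|\omega_{i}\nu_{i}\,\pi_{V_{i}}S_{W}^{-1}\pi_{W_{i}}\bigr\|_{F}
\end{equation*}
for every $i\in I_{m}$, and taking the maximum over $i$ shows that the 1-loss error of $(V,v)$ as a dual of $(W,w)$ equals the 1-loss error of $(UV,v)$ as a dual of $(UW,w)$.

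Finally I would conclude by comparing infima. Given any $(Z,z)\in D_{UW}$, write $Z=UY$ with $(Y,z)\in D_{W}$ from the bijection in the first step. The computation above applied to $(Y,z)$ and $(UY,z)=(Z,z)$ shows that the functionals $(V,v)\mapsto \max_{i}\|\omega_{i}\nu_{i}\pi_{V_{i}}S_{W}^{-1}\pi_{W_{i}}\|_{F}$ on $D_{W}$ and $(Z,z)\mapsto\max_{i}\|\omega_{i}z_{i}\pi_{Z_{i}}S_{UW}^{-1}\pi_{UW_{i}}\|_{F}$ on $D_{UW}$ are intertwined by the bijection $Y\leftrightarrow UY$. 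Hence the two infima coincide, and $(V,v)$ attains one if and only if $(UV,v)$ attains the other, giving the stated equivalence. The only mildly subtle point is the identity $\pi_{UK}=U\pi_{K}U^{*}$, which is immediate from unitarity but is what makes the proof work so cleanly; once that is in place, unitary invariance of $\|\cdot\|_{F}$ does the rest.
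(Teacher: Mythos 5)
Your proposal is correct and follows essentially the same route as the paper: both use Lemma \ref{lem52} (with the trivially satisfied hypotheses $U^{*}UW_{i}\subseteq W_{i}$, $U^{*}UV_{i}\subseteq V_{i}$) to identify $D_{UW}$ with $\{(UZ,z):(Z,z)\in D_{W}\}$, then conjugate the local error operators by $U$ via $\pi_{UV_{i}}S_{UW}^{-1}\pi_{UW_{i}}=U\pi_{V_{i}}S_{W}^{-1}\pi_{W_{i}}U^{*}$ and invoke unitary invariance of the Frobenius norm. Your presentation is, if anything, slightly cleaner in making the bijection and the intertwining of the two objective functionals explicit before comparing infima.
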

\begin{proof}
Since $U$ is unitary $UW:=\{(UW_{i},\omega_{i})\}_{i\in I_{m}}$ is also a fusion frame  for $\mathcal{H}$, with the frame operator $S_{UW}=US_{W}U^{*}$, see \cite{Gav07}. Moreover,
 if $(V,v)$ is a  dual fusion frame of $(W,w)$ then  $\{(UV_{i},\nu_{i})\}_{i\in I_{m}}$ is a dual fusion frame of
$\{(UW_{i},\omega_{i})\}_{i\in I_{m}}$ by Lemma \ref{lem52}. Moreover, the set of all dual fusion frames of $UW$ is as follows
\begin{eqnarray*}
D_{UW}=\{(UV,v) : (V,v)\in D_{W}\}.
\end{eqnarray*}
To prove the robustness of  optimal dual under this operator, let  $(Z,z)$ be a  dual fusion frame of $(W,w)$
 then
\begin{eqnarray*}
\max_{i\in I_{m}} \Vert  T_{UV}\phi_{uv, uw}M_{i}T^{*}_{UW}\Vert_{F}&=&\max_{i\in I_{m}} \Vert  \omega_{i}\nu_{i}\pi_{UV_{i}}S_{UW}^{-1}\pi_{UW_{i}}\Vert_{F}\\
&=&\max_{i\in I_{m}} \Vert  \omega_{i}\nu_{i}U\pi_{V_{i}}S_{W}^{-1}\pi_{W_{i}}U^{*}\Vert_{F}\\
&=& \max_{i\in I_{m}} \Vert  \omega_{i}\nu_{i}\pi_{V_{i}}S_{W}^{-1}\pi_{W_{i}}\Vert_{F}\\
&\leq&\max_{i\in I_{m}} \Vert  \omega_{i}z_{i}\pi_{Z_{i}}S_{W}^{-1}\pi_{W_{i}}\Vert_{F}\\
&=&  max_{i\in I_{m}} \Vert  \omega_{i}z_{i}U\pi_{Z_{i}}S_{W}^{-1}\pi_{W_{i}}U^{*}\Vert_{F}\\
&=& max_{i\in I_{m}} \Vert  \omega_{i}z_{i}\pi_{UZ_{i}}S_{UW}^{-1}\pi_{UW_{i}}\Vert_{F}\\
&=&\max_{i\in I_{m}} \Vert  T_{UZ}\phi_{uz, uw}M_{i}T^{*}_{UW}\Vert_{F}.
\end{eqnarray*}
Similarly, let $(UV,v)$ be a $1$-loss optimal dual fusion frame of $(UW,w)$. Then for every dual fusion frame $(Z,z)$ of $(W,w)$ we can write
\begin{eqnarray*}
\max_{i\in I_{m}} \Vert T_{V}\phi_{vw}M_{i}T^{*}_{W}\Vert_{F}&=&  \max_{i\in I_{m}} \Vert  T_{UV}\phi_{uv, uw}M_{i}T^{*}_{UW}\Vert_{F}\\
&\leq&\max_{i\in I_{m}} \Vert T_{UZ}\phi_{uz, uw}M_{i}T^{*}_{UW}\Vert_{F}=\max_{i\in I_{m}}  \Vert  T_{Z}\phi_{zw}M_{i}T^{*}_{W}\Vert_{F}.
\end{eqnarray*}
This completes the proof.
\end{proof}
\section{Examples}
In this section we give some examples related to the previous sections. The first example determines the advantage of using optimal dual over $P$-optimal dual fusion frames.
\begin{ex}\label{ex01}(A non-Riesz fusion frame which has several optimal duals but the unique $P$-optimal dual.)

Let   $\{e_{i}\}_{i\in I_{4}}$ be the standard orthonormal basis of $\mathbb{R}^{4}$ and put
\begin{eqnarray*}
W_{1} = {\textit{span}}\{e_{1}, e_{2}\}, \quad W_{2} = {\textit{span}}\{e_{2}, e_{3}\}, \quad W_{3} = {\textit{span}}\{e_{4}\},
\end{eqnarray*}
and  $\omega:=\omega_{i}=1$, $i\in I_{3}$. Then $W = \lbrace (W_{i},\omega_{i})\rbrace_{i\in I_{3}}$ is a fusion frame for $\mathbb{R}^{4}$ and
\begin{equation*}
S_{W}^{-1} = \left[
 \begin{array}{ccc}
1 \quad 0 \quad 0 \quad 0\\

0 \quad \dfrac{1}{2} \quad 0 \quad 0\\

0 \quad 0 \quad 1 \quad 0\\

0 \quad 0 \quad 0 \quad 1\\
\end{array} \right].
\end{equation*}
 Hence
\begin{eqnarray*}
S_{W}^{-1}W_{i} = W_{i}, \quad (i\in I_{4}) .
\end{eqnarray*}
Also,
\begin{equation*}
S_{W}^{-1}\pi_{W_{1}} = \left[
 \begin{array}{ccc}

1 \quad 0 \quad 0 \quad 0\\

0 \quad \dfrac{1}{2} \quad 0 \quad 0\\

0 \quad 0 \quad 0 \quad 0\\

0 \quad 0 \quad 0 \quad 0\\

\end{array} \right],  \quad  S_{W}^{-1} \pi_{W_{2}} = \left[
 \begin{array}{ccc}

0 \quad 0 \quad 0 \quad 0\\

0 \quad \dfrac{1}{2} \quad 0 \quad 0\\

0 \quad 0 \quad 1 \quad 0\\

0 \quad 0 \quad 0 \quad 0\\

\end{array} \right],
\end{equation*}
\begin{equation*}
 S_{W}^{-1} \pi_{W_{3}} = \left[
 \begin{array}{ccc}

0 \quad 0 \quad 0 \quad 0\\

0 \quad 0 \quad 0 \quad 0\\

0 \quad 0 \quad 0 \quad 0\\

0 \quad 0 \quad 0 \quad 1\\

\end{array} \right].
\end{equation*}
 Thus, $\max_{i\in I_{3}} \Vert S_{W}^{-1}\pi_{W_{i}} \Vert_{F}=\sqrt{5/4}$  and  $\Lambda_{1}=\{1,2\}$. Moreover, $\mathcal{H}_{1}\cap \mathcal{H}_{2}=\{0\}$ and $ \lbrace (W_{i}, \omega_{i})\rbrace_{i\in \Lambda_{2}}$ is a Riesz fusion basis for $\mathcal{H}_{2}$. By   Theorem 3.3 of  \cite{morilas} the canonical dual is the unique $P$-optimal dual of $(W,w)$ and consequently it is also an optimal dual, by $(\ref{3.2n})$. However, the canonical dual is not the unique optimal one, and  $(W,w)$ has  many optimal dual fusion frame. For example, take
\begin{eqnarray*}
&&V_{1} = {\textit{span}}\{e_{1}, e_{2}, (0,0,\xi_{1}, \xi_{2})\}, \\
&&  V_{2} = {\textit{span}}\{e_{2}, e_{3},(\xi_{3},0, 0,\xi_{4})\},\\
&& V_{3} = {\textit{span}}\{e_{4},  (\xi_{5},\xi_{6}, \xi_{7},0)\},
\end{eqnarray*}
 for every $\xi_{i}\in \mathbb{R}$, $i\in I_{7}$. Then $V = \lbrace (V_{i},w)\rbrace_{i=1}^{3}$ is a dual fusion frame of $(W,w)$.
Moreover,  for every $\xi_{i}\in \mathbb{R}$, $i\in I_{7}$, the sequence $V$ constitutes a $1$-loss  optimal   dual fusion frame of $(W,w)$. In fact,
\begin{eqnarray*}
\max_{i\in I_{3}} \Vert \pi_{V_{i}}S_{W}^{-1}\pi_{W_{i}}\Vert_{F}=\max_{i\in I_{3}}\Vert S_{W}^{-1}\pi_{W_{2}} \Vert_{F}=\sqrt{5/4}.
\end{eqnarray*}
Note that, in this example $\phi_{vw}$ associated with many dual fusion frames $V$  are not  component preserving. For instance, put $V_{1} = {\textit{span}}\{e_{1}, e_{2}, ce_{4}\}$, $c\neq 0$. Then $\pi_{V_{1}}S_{W}^{-1}\pi_{W_{1}} \subset   V_{1}$, indeed $ce_{4}$ is not in $\pi_{V_{1}}S_{W}^{-1}\pi_{W_{1}}$ and so $\phi_{vw}M_{1}\mathcal{W}\neq M_{1}\mathcal{V}$.
\end{ex}
\begin{ex}\label{0002} (construction an overcomplete frame  with a family of optimal duals  by using an orthonormal fusion basis)

Suppose that $\{e_{j}\}_{j\in I_{3}}$ is the standard orthonormal basis of $\mathbb{R}^{3}$ and
\begin{eqnarray*}
W_{1} = {\textit{span}}\{(1,0,1)\}, \quad W_{2} = {\textit{span}}\{(-1,0,1), (0,1,0)\}.
\end{eqnarray*}
Then $W = \lbrace (W_{i},1)\rbrace_{i=1}^{2}$ is an orthonormal fusion basis for $\mathbb{R}^{3}$. A straightforward computation shows that
\begin{equation*}
{\{\pi_{W_{i}}e_{j}\}_{j\in I_{3},i\in I_{2}}} =\left\{ \left[
 \begin{array}{ccc}

1/2\\
\\
0\\
\\
1/2 \\
\end{array} \right], \left[
 \begin{array}{ccc}

1/2\\
\\
0\\
\\
1/2\\
\end{array} \right], \left[
 \begin{array}{ccc}

1/2\\
\\
0\\
\\
-1/2\\
\end{array} \right], \left[
 \begin{array}{ccc}

0\\
\\
1\\
\\
0\\
\end{array} \right] , \left[
 \begin{array}{ccc}

-1/2\\
\\
0\\
\\
1/2\\
\end{array} \right] \right\}.
\end{equation*}
So, $F:=\{\pi_{W_{i}}e_{j}\}_{j\in I_{3},i\in I_{2}}$ is an overcomplete Parseval  frame and $\{\pi_{V_{i}}e_{j}\}_{j\in I_{3},i\in I_{2}}$ is an optimal dual frame of $F$ for every sequence $\{V_{i}\}_{i=1}^{2}$ so that of $W_{i}\subseteq V_{i}$, by Theorem \ref{orthonormal}. For example $F\in OD_{F}$ and also by putting
\begin{eqnarray*}
V_{1}= {\textit{span}}\{(1,0,1), (1,0,-1)\}\quad, V_{2}=W_{2},
\end{eqnarray*}
 we obtain an optimal  alternate dual frame of $F$ as follows
\begin{equation*}
{G} =\left\{ \left[
 \begin{array}{ccc}

1\\
\\
0\\
\\
0  \\
\end{array} \right], \left[
 \begin{array}{ccc}

0\\
\\
0\\
\\
1\\
\end{array} \right], \left[
 \begin{array}{ccc}

1/2\\
\\
0\\
\\
-1/2\\
\end{array} \right], \left[
 \begin{array}{ccc}

0\\
\\
1\\
\\
0\\
\end{array} \right] , \left[
 \begin{array}{ccc}

-1/2\\
\\
0\\
\\
1/2\\
\end{array} \right] \right\}.
\end{equation*}
\end{ex}
Finally, we present an example of a non-Riesz fusion frame which provides an overcomplete frame  that  its canonical dual is optimal dual for $1$-erasure, however it is not partial optimal for $r$-erasures, for all $r>1$.
\begin{ex}
Let   $\mathcal{H}=\mathbb{R}^{3}$  with the standard orthonormal basis $\{e_{j}\}_{j\in I_{3}}$ and take
\begin{eqnarray*}
W_{1} ={\textit{span}}\{e_{1}, e_{2}\}, \quad W_{2} = {\textit{span}}\{e_{2}\}, \quad W_{3} = {\textit{span}}\{e_{3}, e_{1}- e_{2}\},
\end{eqnarray*}
and  $\omega_{i}=1$, $i\in I_{3}$. Then $W = \lbrace (W_{i},\omega_{i})\rbrace_{i\in I_{3}}$ is a fusion frame for $\mathcal{H}$ and
\begin{equation*}
S_{W}^{-1} = \left[
 \begin{array}{ccc}
\dfrac{5}{7} \quad \dfrac{1}{7} \quad 0\\

\\
\dfrac{1}{7} \quad \dfrac{3}{7} \quad 0\\

\\
0 \quad 0 \quad 1 \\

\end{array} \right].
\end{equation*}
Hence, we compute $F=\{\pi_{W_{i}}S_{W}^{-1}e_{j}\}_{i,j}$ as follows
\begin{equation*}
{F} =\left\{ \left[
 \begin{array}{ccc}

5/7\\
\\
1/7 \\
\\
0  \\
\end{array} \right], \left[
 \begin{array}{ccc}

1/7\\
\\
3/7\\
\\
0\\
\end{array} \right], \left[
 \begin{array}{ccc}

0\\
\\
1/7\\
\\
0\\
\end{array} \right], \left[
 \begin{array}{ccc}

0\\
\\
3/7\\
\\
0\\
\end{array} \right] , \left[
 \begin{array}{ccc}

2/7\\
\\
-2/7\\
\\
0\\
\end{array} \right], \left[
 \begin{array}{ccc}

-1/7\\
\\
1/7\\
\\
0\\
\end{array} \right], \left[
 \begin{array}{ccc}

0\\
\\
0\\
\\
1\\
\end{array} \right] \right\}
\end{equation*}
and
\begin{equation*}
{S_{F}^{-1}F} =\left\{ \left[
 \begin{array}{ccc}

1\\
\\
0\\
\\
0  \\
\end{array} \right], \left[
 \begin{array}{ccc}

0\\
\\
1\\
\\
0\\
\end{array} \right], \left[
 \begin{array}{ccc}

1/10\\
\\
3/10\\
\\
0\\
\end{array} \right], \left[
 \begin{array}{ccc}

3/10\\
\\
9/10\\
\\
0\\
\end{array} \right] , \left[
 \begin{array}{ccc}

4/5\\
\\
-2/5\\
\\
0\\
\end{array} \right], \left[
 \begin{array}{ccc}

-2/5\\
\\
1/5\\
\\
0\\
\end{array} \right], \left[
 \begin{array}{ccc}

0\\
\\
0\\
\\
1\\
\end{array} \right] \right\}.
\end{equation*}
Then we obtain $\max_{i\in I_{7}}\Vert S_{F}^{-1}f_{i}\Vert\Vert f_{i}\Vert=\Vert S_{F}^{-1}f_{7}\Vert\Vert f_{7}\Vert=1$. By taking a sequence $\{u_{i}\}_{i\in I_{7}}$ satisfies $(\ref{dual relation})$ one derives the following relations
\begin{eqnarray}\label{ufi}
&&5u_{1}+u_{2}+2u_{5}-u_{6}=0,\\
&& u_{1}+3u_{2}+u_{3}+3u_{4}-2u_{5}+u_{6}=0,\\
&&u_{7}=0.
\end{eqnarray}
Due to $(5.3)$ for every dual frame $\{g_{i}\}_{i\in I_{7}}=\{S_{F}^{-1}f_{i}+u_{i}\}_{i\in I_{7}}$ of $F$ we have that
\begin{equation*}
\max_{i\in I_{7}}\Vert S_{F}^{-1}f_{i}\Vert\Vert f_{i}\Vert=\Vert g_{7}\Vert\Vert f_{7}\Vert \leq
\max_{i\in I_{7}}\Vert g_{i}\Vert\Vert f_{i}\Vert,
\end{equation*}
i.e., $S_{F}^{-1}F\in OD_{F}$. We show that the canonical dual cannot be considered as a partial optimal dual for $2$-erasures. To this end,  let $i$th and $j$th component are lost. we set $u_{i}=\dfrac{-1}{2}S_{F}^{-1}f_{i}$ and $u_{j}=\dfrac{-1}{2}S_{F}^{-1}f_{j}$ then one obtain $u_{k}$, $k\in I_{7}$, $k \neq  i,j$ by $(5.1)$,  $(5.2)$ and $(5.3)$. So $\{g_{i}\}_{i\in I_{7}}=\{S_{F}^{-1}f_{i}+u_{i}\}_{i\in I_{7}}$ is a dual frame of $F$. Suppose $D$ is a $7 \times7$ diagonal matrix with  $d_{ii}=d_{jj}=1$  and other matrix elements $0$. Then
\begin{eqnarray*}
\left\Vert T_{S_{F}^{-1}F}DT^{*}_{F}\right\Vert_{F}&=& \left\Vert \sum_{k=i,j}S_{F}^{-1}f_{k}\otimes f_{k}\right\Vert_{F}\\
&>&\dfrac{1}{2}\left\Vert \sum_{k=i,j}S_{F}^{-1}f_{k}\otimes f_{k}\right\Vert_{F}\\
&=& \left\Vert \sum_{k=i,j}g_{k}\otimes f_{k}\right\Vert_{F}\\
&=&\left\Vert T_{G}DT^{*}_{F}\right\Vert_{F}.
\end{eqnarray*}
Thus the canonical dual is not a partial optimal dual for $2$-erasures and so for any $r$-erasures, $r>1$.
\end{ex}

\bibliographystyle{amsplain}

\begin{thebibliography}{10}


\bibitem{Aa2018}   F. Arabyani-Neyshaburi, A. Arefijamaal and Gh. Sadeghi, Extreme points and identification of  optimal alternate dual frames,  Linear Algebra Appl. \textbf{549}  (2018), 123-135.

\bibitem{AAS}   F. Arabyani-Neyshaburi, A. Arefijamaal and Gh. Sadeghi, Numerically and spectrally optimal dual frames in Hilbert spaces,  Linear Algebra Appl. \textbf{604}  (2020), 52-71.


\bibitem{Ar13}
A. Arefijamaal, E. Zekaee, Signal processing by alternate dual Gabor
frames, Appl. Comput. Harmon. Anal. \textbf{35} (2013), 535-540.
\bibitem{arefi1} A. Arefijamaal, F. Arabyani-Neyshaburi, Some properties of  dual and approximate dual of fusion frames,  Turkish J. Math.
\textbf{41}  (2017), 1191-1203.
\bibitem{Ar01}  A.   Arefijamaal, F. Arabyani-Neyshaburi and M. Shamsabadi,  On the duality of frames and fusion frames, Hacet. J. Math. Stat.
 \textbf{47}(1) (2018), 1-10.







\bibitem{Ben06} J. Benedetto, A. Powell and  O. Yilmaz, Sigm-Delta quantization and finite frames, IEEE Trans. Inform. Theory. \textbf{52} (2006), 1990-2005.
\bibitem{Bod04}J.J. Benedetto, M. Fickus, Finite normalized tight frames, Adv. Comput. Math. \textbf{18}(2-4) (2003), 357-385.

\bibitem{Bod03} B.G. Bodmann, Optimal linear transmission by loss-insensitive packet encoding, Appl. Comput. Harmon. Anal.
\textbf{22}(3) (2007), 274-285.
\bibitem{Bod05} B. G. Bodmannand, V. I. Paulsen, Frames, graphs and erasures,
Linear. Algebra. Appl. \textbf{ 404} (2005), 118-146.

\bibitem{Bol98} H. Bolcskel, F. Hlawatsch and H. G. Feichtinger, Frame-theoretic analysis of oversampled filter banks. IEEE Trans. Signal Process. \textbf{46} (1998), 3256-3268.
%


\bibitem{Cas04} P. G. Casazza, G. Kutyniok, Frames of subspaces,
Contemp. Math. \textbf{345} (2004), 87-114.
\bibitem{Cassaz} P. G. Casazza, G. Kutyniok, S. Li, and C. J. Rozell, Modeling Sensor Networks with Fusion Frames,  in: Proc. SPIE, vol. 6701, 2007, pp. 67011M-1–11.
\bibitem{Cas08}  Casazza, P. G.,  Kutyniok, G. and Li, S.  Fusion frames and distributed processing, Appl. Comput. Harmon. Anal.
{\bf 25} (1), 114-132, 2008.


\bibitem{Chr08} O. Christensen, Frames and Bases: An Introductory Course, Birkh\"{a}user, Boston, 2008.

\bibitem{Ole01} A. Chebira, M. Fickus, D. G. Mixon, Filter Bank Fusion Frames, IEEE Transactions on Signal Processing. {\bf 59}(3) (2011), 953-963.





\bibitem {Gav07} P. G$\breve{\textrm{a}}$vru\c{t}a, On the duality of fusion frames, J. Math. Anal.
Appl. \textbf{333} (2) (2007), 871-879.

 \bibitem{Hei14}
S. B. Heineken, P. M. Morillas, A. M. Benavente, M. I. Zakowicz,
Dual fusion frames, Arch. Math. \textbf{103} (2014), 355-365.


\bibitem {Hein} S. B. Heineken, P. M. Morillas, Properties of finite dual fusion frames,
Linear Algebra Appl. \textbf{453} (2014), 1-27.
\bibitem{sensor}
S. S. Iyengar, R. R. Brooks, Eds,
Distributed sensor networks, Chapman, Boston Rouge, La, USA, 2005.


 \bibitem{hear}
C. J. Rozell, D. H. Johnson, Analysing the robustness of redundant population codes in sensory and feature extraction systems, Nurocomputing. \textbf{69} (2006), 1215-1218.
\bibitem {leng} J. Leng, D. Han, Optimal dual frames for erasures II, Lin.  Algebra  Appl. \textbf{435} (6) (2011), 1464-1472.
\bibitem{Han-leng} J. Leng, D. Han, and T. Huang, Optimal dual frames for communication
coding with probabilistic erasures,  IEEE Trans. Signal Process.  \textbf{59}(11) (2011),
5380-5389.
\bibitem {lopez} J. Lopez, D. Han, Optimal dual frames for erasures, Lin.  Algebra  Appl. \textbf{432} (1) (2010), 471-482.
\bibitem {massey} P.G. Massey, M. A. Ruiz, D. Stojanoff, Robust dual  reconstruction and fusion frames,
Acta Appl. Math. \textbf{119}(1) (2012), 167-183 .

\bibitem {morilas} P. M. Morillas, Optimal dual fusion frames for probabilistic erasures,
Electronic Journal of Linear Algebra. \textbf{32} (2017), 191-203.
\bibitem {Saliha} S. Pehlivan, D. Han and R. Mohapatra, Linearly connected sequences and spectrally optimal dual frames for erasures, J. Functional Anal. \textbf{265} (11) (2013), 2855-2876.
\bibitem{shamsabadi} M. Shamsabadi, A. Arefijamaal, The invertibility of fusion frame multipliers, Linear Multilinear Algebra. \textbf{65} (5) (2017), 1062-1072.
\bibitem {sto} T. Strohmer and R.W Heath Jr, Grassmannian frames with applications to coding and communication, Appl.
Comput. Harmon. Anal. \textbf{14} (2003), 257-275.

\end{thebibliography}

\end{document}